\numberwithin{equation}{section}
\newtheorem{thm}{Theorem}[section]
\newtheorem{prop}[thm]{Proposition}
\newtheorem{lem}[thm]{Lemma}
\newtheorem{cor}[thm]{Corollary}
\newtheorem{dfn}[thm]{Definition}
\theoremstyle{definition} 
\newtheorem{ex}[thm]{Example}
\newtheorem{rem}[thm]{Remark}
\newcommand\ND{\newcommand}
\ND\lref[1]{Lemma~\ref{#1}}
\ND\tref[1]{Theorem~\ref{#1}}
\ND\pref[1]{Proposition~\ref{#1}}
\ND\sref[1]{Section~\ref{#1}}
\ND\ssref[1]{Subsection~\ref{#1}}
\ND\aref[1]{Appendix~\ref{#1}}
\ND\rref[1]{Remark~\ref{#1}}
\ND\cref[1]{Corollary~\ref{#1}}
\ND\eref[1]{Example~\ref{#1}}
\ND\fref[1]{Fig.\ {#1} }
\ND\lsref[1]{Lemmas~\ref{#1}}
\ND\tsref[1]{Theorems~\ref{#1}}
\ND\dref[1]{Definition~\ref{#1}}
\ND\psref[1]{Propositions~\ref{#1}}
\ND\rsref[1]{Remarks~\ref{#1}}
\ND\sssref[1]{Subsections~\ref{#1}}
\ND\esref[1]{Examples~\ref{#1}}
\newcommand{\ep}{\varepsilon}
\newcommand{\h}{\quad}
\title{Convergence of martingales with jumps on submanifolds of Euclidean spaces and its applications to harmonic maps}
\date{}
\author{Fumiya Okazaki}
\begin{document}
\maketitle
\footnote{Mathematical Institute, Graduate School of Science, Tohoku University, Sendai, Japan}
\footnote{MSC2020 Subject Classifications: 60G44, 60J45}
\footnote{Email address: fumiya.okazaki.q4@dc.tohoku.ac.jp}
\renewcommand{\thepage}{\arabic{page}}
\begin{abstract}
Martingales with jumps on Riemannian manifolds and harmonic maps with respect to Markov processes are discussed in this paper. Discontinuous martingales on manifolds were introduced in Picard (1991). We obtain results about the convergence of martingales with finite quadratic variations on Riemannian submanifolds of higher dimensional Euclidean space as $t\to \infty$ and $t\to 0$. Furthermore we apply the result about martingales with jumps on submanifolds to harmonic maps with respect to Markov processes such as fractional harmonic maps.
\end{abstract}
\section{Introduction}
Martingales on manifolds have been studied in connection with harmonic maps. It is well known that given a harmonic map between Riemannian manifolds, we can obtain a martingale on the target manifold by inserting a Brownian motion on the domain manifold into the map. In \cite{Ken} and \cite{Atsu}, some Liouville-type theorems for harmonic maps were shown by probabilistic methods. To prove some of the Liouville-type theorems, they used the convergence theorem for martingales on a Riemannian manifold shown in \cite{Dar2}. The convergence of backward continuous martingales as $t\to 0$ was also considered in \cite{JBWalsh}, \cite{Sharpe}, \cite{CG}, \cite{Eme2} and \cite{HeZheng}. The convergence of backward martingales is related to singularities of harmonic functions and harmonic maps.\\

The aim of our research is to extend the above results to discontinuous martingales on Riemannian submanifolds of higher dimensional Euclidean spaces. First we describe our setting and main result. Except for \sref{harmonic}, we always assume that we are given a filtered probability space $(\Omega, \mathcal{F}, \{ \mathcal{F}_t\}_{t\geq 0}, \mathbb{P})$ and the usual hypotheses for $\{ \mathcal{F}_t \}_{t\geq 0}$ hold. A stochastic process $X$ valued in a manifold $M$ is called an $M$-valued semimartingale if $f(X)$ is an $\mathbb{R}$-valued semimartingale for all $f\in C^{\infty}(M)$. In \cite{Pic1}, a map $\gamma$ from $M\times M$ to $TM$ called a connection rule was introduced to define the It\^o integral on a manifold. We review connection rules in \sref{pre}. Given a connection rule $\gamma$, we can determine directions of jumps of a semimartingale $X$ by $\Delta X_s = \gamma (X_{s-}, X_s)$. Furthermore we can define the It\^o integral of 1-form along a discontinuous semimartingale by a connection rule $\gamma$ by \cite{Pic1}. It is denoted by $\displaystyle \int \alpha(X_-)\, \gamma dX$, where $\alpha$ is a 1-form. A semimartingale $X$ is called a $\gamma$-martingale if for any 1-form, the It\^o integral along $X$ with respect to $\gamma$ is an $\mathbb{R}$-valued local martingale. Note that the definition of discontinuous martingale on a manifold depends on the directions of jumps $\Delta X$ because the stochastic integral along discontinuous semimartingales can be defined only when directions of jumps are determined. As shown in \cite{Pic1}, each connection rule determines a torsion-free connection on $M$. Thus we need somewhat more information than that of connections to define the It\^o integral. If $X$ is continuous, we can define the It\^o integral along $X$ with respect to a connection $\nabla$. The basic properties of manifold-valued continuous martingales can be found in \cite{Dar1}, \cite{Eme}, and \cite{Hsu}. Given a Riemannian metric $g$ on a manifold $M$, we can consider continuous martingales on $M$ with respect to the Levi-Civita connection. In \cite{Dar2}, Darling and Zheng have proven that an $M$-valued continuous martingale $X$ with respect to the Levi-Civita connection converges as $t\to \infty$ in the one-point compactification of $M$ almost surely on $\{ \int_0^{\infty}g(X_s)\, d[X,X]_s<\infty \}$; see also \cite{Mey}. In this sense, one of our main results, \tref{conv2} below, can be seen as a convergence theorem for discontinuous martingales on submanifolds of higher dimensional Euclidean spaces. The main idea of the proof is based on \cite{Dar2}. Let $M$ be an $n$-dimensional Riemannian submanifold of $\mathbb{R}^d$. We define the connection rule $\eta$ on $M$ by
\begin{gather}\label{eta}
\eta (x,y)=\Pi_x(y-x),\ x,y\in M,
\end{gather}
where $\Pi_x:\mathbb{R}^n\to T_xM$ is the orthonormal projection. Then for a semimartingale $X$ on $M$, we can define the direction of jumps by
\begin{gather*}
\Delta X_s=\eta (X_{s-},X_s),\ s\geq 0.
\end{gather*}
\begin{thm}\label{conv2}
Let $M$ be a complete Riemannian submanifold of $\mathbb{R}^d$ and $X$ an $M$-valued $\eta$-martingale with
\[
\mathbb{E}\left[ [X,X]_{\infty} \right]<\infty,
\]
where $[X,X]$ is the quadratic variation of $X$ as an $\mathbb{R}^d$-valued semimartingale. Then $X_t$ converges in $M_{\Delta}$ as $t\to \infty$ with probability one, where $M_{\Delta}$ is the one-point compactification of $M$. Furthermore if $|\Delta X|$ is uniformly bounded on $[0,\infty)\times \Omega$, then the convergence of $X$ occurs almost surely on $\{ [X,X]_{\infty}<\infty \}$. 
\end{thm}
Compared with \cite{Dar2}, we use the quadratic variation as an $\mathbb{R}^d$-valued semimartingale instead of the Riemannian quadratic variation $\int g(X)\, d[X,X]$. If $X$ is continuous, the two quadratic variations coincide. However if $X$ has jumps, they are different and we can easily construct a martingale which does not converge even though the latter quadratic variation equals zero. See \rref{counterexample}.

Next we will show a convergence theorem of $M$-valued martingales on $(0,\infty)$ as $t\to 0$ as considered in \cite{Eme2}. A martingale with a parameter $t>0$ was first considered in \cite{JBWalsh} concerning singularities of complex functions and the result in \cite{JBWalsh} was extended to general continuous local martingales in \cite{Sharpe} and \cite{CG}. Such results for manifold-valued continuous martingale were shown in \cite{Eme2} and \cite{HeZheng}. The method of \cite{HeZheng} was the modification of \cite{HeYanZheng}. Our next statement is the discontinuous version of \'Emery's result. We emphasize that some facts which hold only for continuous martingales were used in the proofs in \cite{Eme2} and \cite{HeZheng}. In particular the fact that the composition of a manifold-valued martingale and a convex function with respect to geodesics on the manifold is a submartingale does not hold for manifold-valued discontinuous martingales in our cases, e.g. Picard \cite{Pic2}. In \sref{main}, we will introduce an $M$-valued martingale with an end point in $\mathbb{R}^d$. It is an $M$-valued martingale which allows inside killing.
\begin{thm}\label{convtto0}
Let $M$ be a compact submanifold of $\mathbb{R}^d$ and $(\{X_t\}_{t>0},\zeta,p)$ an $M$-valued $\eta$-martingale with a killing time $\zeta$ and an end point $p$ indexed by $t \in (0,\infty)$.
\begin{itemize}
\item[(1)]
Suppose $\displaystyle X_0:=\lim_{t\to 0}X_t$ exists almost surely. Then $(\{X_t\}_{t\geq 0},\zeta,p)$ is an $M$-valued $\eta$-martingale with the end point $p$.
\item[(2)]Suppose $\displaystyle \lim_{\varepsilon \to 0}[X,X]^{\varepsilon}_t<\infty$ almost surely. Then $\displaystyle \lim_{t\to 0}X_t$ exists almost surely, where
\[
[X,X]^{\varepsilon}_t:=\int_{(\varepsilon,t]}d[X,X]_s.
\]
\end{itemize}
\end{thm}
We apply the theory of martingales with jumps on a submanifold to harmonic maps with respect to non-local Dirichlet forms. Fractional harmonic maps introduced in \cite{Lio, Lio2} are one of the most typical examples of such harmonic maps. A fractional harmonic map is a harmonic map with respect to the fractional Laplacian $(-\Delta)^{\frac{\alpha}{2}}$, $\alpha \in (0,2)$. The energy of a map $u$ from $\mathbb{R}^m$ to a compact submanifold $M$ of $\mathbb{R}^d$ with respect to the fractional Laplacian is defined by
\[
\mathcal{E}(u)=\int_{\mathbb{R}^m}|(-\Delta)^{\frac{\alpha}{4}} u(z)|^2dz,
\]
and the harmonic map is its stationary map. In \sref{harmonic}, we will see that we can obtain a discontinuous martingale on a submanifold by inserting an $\alpha$-symmetric stable process into a map satisfying the Lagrange equation with respect to the fractional Laplacian in some situations. In this way, it can be expected that the fractional harmonic map discussed in \cite{Lio, Lio2} can be studied by using stochastic processes. Applying \tref{conv2}, we also show the convergence of discontinuous martingales on manifolds valued in a small domain in a circle.
We also show a Liouville-type theorem for fractional harmonic maps valued in a circle without any conditions for domain spaces and Markov processes except for the Liouville property for bounded harmonic functions. Furthermore we also give a sufficient and necessary condition for the fine continuity of harmonic maps from the probabilistic point of view as an application of \tref{convtto0}.\\

We give an outline of the paper. In \sref{pre}, we review connection rules introduced in \cite{Pic1} and use them to define the stochastic integral of 1-form and the quadratic variation of 2-tensor along discontinuous semimartingales on manifolds. In \sref{main}, we give the proofs of Theorems \ref{conv2} and \ref{convtto0}. Then we apply our main theorems to harmonic maps for non-local operators in \sref{harmonic}.\\

Throughout this paper, given a metric space $M$, we denote by $C_0(M)$ the set of all continuous functions on $M$ with compact support. In the case that $M$ is a manifold, we denote by $C_0^{\infty}(M)$ the set of all $C^{\infty}$ functions with compact support. For two manifolds $M$ and $N$ and a positive integer $k$, we denote by $C^k(M\, ; N)$ the set of all $C^k$ maps from $M$ to $N$.
\section{Stochastic integral on manifolds}\label{pre}
In this section, we review connection rules and stochastic integrals along c\`adl\`ag semimartingales on manifolds which were introduced in \cite{Pic1}. The stochastic integrals along c\`adl\`ag semimartingales were further studied in \cite{Co1, Co2} and the definition of connection rules was extended in \cite{Co2}. Several works done in \cite{Co1, Co2} were summarized in \cite{Mai}. In this article, we review the definition of connection rules considered in \cite{Co2}. We suppose that $M$ is a paracompact $C^{\infty}$ manifold. We set
\[
\mathrm{diag}(M):=\{ (x,y)\in M\times M \mid x=y \}.
\]
\begin{dfn}
Let $\gamma:M\times M\to TM$ be a measurable map and suppose $\gamma$ is $C^2$ on a neighborhood of $\mathrm{diag}(M)$, that is, there exists an open neighborhood $\mathcal{U} \subset M \times M$ of $\mathrm{diag}(M)$ such that $\gamma \in C^2(\mathcal{U}\, ;TM)$. Then $\gamma$ is called a \textbf{connection rule} if it satisfies the following conditions, for all $x,y\in M$,
\begin{enumerate}
\item[(i)] $\gamma (x,y)\in T_xM$;
\item[(ii)] $\gamma (x,x)=0$;
\item[(iii)] $(d \gamma (x,\cdot))_x=id_{T_xM}.$
\end{enumerate}
\end{dfn}
\begin{ex}\label{conne1}
If $M=\mathbb{R}^d$, the map $\gamma$ defined by
\[
\gamma (x,y)=y-x,\ x,y\in M
\]
is a connection rule.
\end{ex}
\begin{ex}\label{conne2}
Let $M$ be a submanifold of $\mathbb{R}^N$ and $\Pi _x:\mathbb{R}^N\to T_xM$ an orthonormal projection for each $x\in M$. Then
\[
\gamma (x,y)=\Pi_x (y-x),\ x,y\in M
\]
is a connection rule.
\end{ex}
\begin{ex}\label{conne3}
Let $M$ be a strongly convex Riemannian manifold. Then
\begin{gather}
\gamma (x,y)=\exp_x^{-1}y,\ x,y\in M \label{conne3def}
\end{gather}
is a connection rule.
\end{ex}
As mentioned in \cite{Pic1}, a connection rule induces a torsion-free connection. The connection induced by a connection rule given in \esref{conne1} through \ref{conne3} is the Levi-Civita connection.
\begin{dfn}\label{Pic3.1}
An $M$-valued stochastic process $X$ is called an \textbf{$\bm{M}$-valued semimartingale} if for all $f\in C^{\infty}(M)$, $f(X)$ is an $\mathbb{R}$-valued c\`{a}dl\`{a}g semimartingale.
\end{dfn}
The following \pref{Pic3.2} shown in \cite{Pic1} determines the It\^o integral of 1-form and the quadratic variation of 2-tensor along c\`{a}dl\`{a}g semimartingales on manifolds.
\begin{prop}[\cite{Pic1}, Proposition 3.2]\label{Pic3.2}
Let $\gamma$ be a connection rule, $X$ an $M$-valued semimartingale, and $\phi$ a $T^*M$-valued process above $X$, that is, $\phi_t\in T^*_{X_t}M$ for all $t\geq 0$ almost surely. Let $\{ U_i\}_{i=1}^{\infty}$ be an atlas such that $\gamma$ is differentiable on each $U_i\times U_i$ and each $U_i$ appears an infinite number of times in the sequence. Define the sequence of stopping times $\{ \sigma_i \}_{i=0}^{\infty}$ by $\sigma_0=0$ and
\[
\sigma_i=\inf \{ s\geq \sigma_{i-1}\mid X_s \notin U_i \}.
\]
for $i\geq 1$. Let
\[
\Delta^n :0=\tau_0^n<\tau_1^n<\dots <\tau_{k_n}^n
\] 
be a random partition tending to infinity, i.e.
\[
\lim _{n\to \infty} |\Delta ^n | = 0,
\]
where $|\Delta ^n | =\max \{ |\tau_i^n-\tau_{i-1}^n|\, ;\,i=1,\dots ,k_n\}$ and
\[
\lim_{n\to \infty} \tau_{k_n}^n=\infty.
\]
Moreover, suppose that $\Delta ^n$ contains $\{ \sigma_i \land \tau_{k_n}^n \}_{i=1}^{\infty}$ for all $n$.
Set
\[
J_t^n:=\sum _{i=1}^{k_n} \langle \phi _{\tau_{i-1}^n\land t}, \gamma (X_{\tau_{i-1}^n\land t},X_{\tau_i^n\land t})\rangle,
\]
where $s\land t=\min \{s,t\}$ for $s,t\geq 0$. Then $J_t^n$ converges in probability as $n\to \infty$ for every $t\geq 0$ and the limit $J_t$ is independent of the partition. Furthermore, the process $\{ J_t \}_{t\geq 0}$ has a modification which is a c\`{a}dl\`{a}g semimartingale.
\end{prop}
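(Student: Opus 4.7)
The plan is to reduce the problem to a chart-by-chart analysis, and within each chart to Taylor expand the connection rule around the diagonal so that the Riemann-type sums $J_t^n$ split into a classical Itô-type sum plus a second-order correction that recovers a quadratic variation. First, I would use the atlas $\{U_i\}$ and the stopping times $\theta_i$ to localize: since the partition $\Delta^n$ contains $\{\theta_i\wedge T^n_{k_n}\}_i$, on each interval $(\theta_{i-1},\theta_i]$ both $X_{T^n_{j-1}\wedge t}$ and $X_{T^n_{j}\wedge t}$ (away from the endpoint jump) lie in $U_i$, so $\gamma$ is $C^2$ on the relevant pairs. By stopping at $\theta_i$ and summing we may assume $X$ takes values in a single chart $U$, with $\gamma$ represented in coordinates by a $C^2$ map $\gamma^a(x,y)$.

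In coordinates, conditions (ii)--(iii) give the Taylor expansion
\begin{gather*}
\gamma^a(x,y)=(y^a-x^a)+\tfrac12\, \Gamma^a_{bc}(x)(y^b-x^b)(y^c-x^c)+r^a(x,y),
\end{gather*}
with $r^a(x,y)=o(|y-x|^2)$ uniformly on compacts, where $\Gamma^a_{bc}(x)$ comes from the Hessian of $\gamma(x,\cdot)$ at $x$. Inserting this into $J_t^n$ splits the sum into three pieces:
\begin{gather*}
J_t^n = \sum_i \phi_a(X_{T^n_{i-1}\wedge t})\bigl(X^a_{T^n_i\wedge t}-X^a_{T^n_{i-1}\wedge t}\bigr) + \tfrac12\sum_i \phi_a\Gamma^a_{bc}(X_{T^n_{i-1}\wedge t})\Delta_i X^b \Delta_i X^c + R_t^n .
\end{gather*}
The first sum converges in probability, for any Riemann sequence, to the classical Itô integral $\int \phi_a(X_-)\,dX^a$ against the $\mathbb{R}^d$-valued semimartingale $X$ in coordinates; the second sum converges to $\tfrac12\int \phi_a\Gamma^a_{bc}(X_-)\,d[X^b,X^c]$ by the standard convergence of sums of squared increments to the quadratic covariation, with $\phi_a\Gamma^a_{bc}$ càdlàg and locally bounded under stopping. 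Both limits are semimartingales, the sum of a stochastic integral against a semimartingale and a finite-variation process.

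The third term, the remainder $R_t^n$, is the main technical obstacle. For the continuous part of $X$ and for small jumps, $|r(x,y)|\le \varepsilon |y-x|^2$ uniformly on the chart, so $|R_t^n|$ is dominated by $\varepsilon$ times the partial sums converging to $[X,X]_t$, which can be made arbitrarily small. The large jumps are more delicate: only finitely many jumps exceeding any $\delta>0$ occur on $[0,t]$, but Definition 2.5(iv) for some fixed connection rule, say the one defined locally by $y-x$, together with the local-boundedness of $\phi$ and the càdlàg property, ensures that the contributions of these isolated jumps stabilize exactly to the jump terms $\sum_{s\le t}\langle\phi_{s-},\gamma(X_{s-},X_s)\rangle$ on the partition refinements containing the jump times, and by (iv) this series converges absolutely. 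Handling jumps not present in $\Delta^n$ requires splitting large jumps (countably many, hence approximable by finite sets) from small ones (controlled quadratically).

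Once convergence in probability is established, independence of the partition follows from a subsequence argument: given any two refining sequences, interleave them into a third one converging to the same limit. The càdlàg semimartingale modification is obtained because the identified limit $J_t = \int\phi_a(X_-)\,dX^a + \tfrac12\int\phi_a\Gamma^a_{bc}(X_-)\,d[X^b,X^c] + \sum_{s\le t}\langle\phi_{s-},\gamma(X_{s-},X_s)-\mathrm{linear\ part}\rangle$ is manifestly the sum of a càdlàg local martingale, a finite-variation process, and an absolutely convergent pure-jump process. Finally, pasting the chart-wise definitions using the exhausting sequence $\{\theta_i\}$ and the fact that each $U_i$ appears infinitely often in the atlas yields a process defined up to the explosion time, which is $\infty$ since $X$ is globally a semimartingale; this produces the desired global $J_t$.
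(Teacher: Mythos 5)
The paper offers no proof of \pref{Pic3.2} to compare against: it is quoted verbatim from Picard \cite{Pic1}*{Proposition 3.2}. Your outline is essentially a reconstruction of Picard's argument --- localize via the chart stopping times $\theta_i$, Taylor-expand $\gamma$ at the diagonal using conditions (ii)--(iii) of the definition of a connection rule, reduce the first two pieces to the classical approximation theorems for the It\^o integral and the quadratic covariation of $\mathbb{R}^d$-valued semimartingales, and treat the finitely many large jumps separately from the quadratically controlled small increments. That is the right structure, and the pasting over charts at the end is standard.

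Two points need repair. First, you invoke condition (iv) of \dref{Pic3.1} to control the jump contributions, but that condition concerns a prescribed jump process $\Delta X$ attached to a $\Delta$-semimartingale; in \pref{Pic3.2} no $\Delta X$ is given (the integral $\int \phi_-\,\gamma dX$ depends only on $X$ and $\gamma$), so (iv) is neither available nor needed. What you actually need is that $\sum_{0<s\le t}|\Delta X_s|^2<\infty$ a.s. for an $\mathbb{R}^d$-valued semimartingale, combined with $\gamma^a(x,y)-(y^a-x^a)=O(|y-x|^2)$ locally uniformly; this yields absolute convergence of the second-order jump corrections. Second, your closing identification of the limit double-counts: if the middle term is $\tfrac12\int\phi_a\Gamma^a_{bc}(X_-)\,d[X^b,X^c]$ with the full bracket, its purely discontinuous part already contains $\tfrac12\phi_a\Gamma^a_{bc}(X_{s-})\Delta X^b_s\Delta X^c_s$, which is counted again inside $\sum_s\langle\phi_{s-},\gamma(X_{s-},X_s)-\text{linear part}\rangle$. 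The consistent formula uses $[X^b,X^c]^c$ in the middle term, or else keeps the full bracket and puts only the remainder $r^a(X_{s-},X_s)$ into the jump sum. Relatedly, the bound $|r(x,y)|\le\varepsilon|y-x|^2$ holds only for $|y-x|$ small, so after excising the finitely many jumps of size exceeding $\delta$ you still need the standard oscillation lemma for c\`adl\`ag paths to ensure the remaining partition increments are eventually dominated by a quantity tending to $0$ with $\delta$; this is where the mesh condition on $\Delta^n$ genuinely enters and should be made explicit.
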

We denote $\{ J_t\}_{t\geq 0}$ by $\displaystyle \int \phi_-\, \gamma dX$. This can be considered as the \textbf{stochastic integral along $\bm{X}$ with respect to $\bm{\gamma}$}. We can construct the \textbf{quadratic variation of 2-tensor} with respect to a connection rule in a similar way. Let $b$ be a $T^*M\otimes T^*M$-valued c\`{a}dl\`{a}g adapted process above $X$. Let 
\[
\Delta^n :0=\tau_0^n<\tau_1^n<\dots <\tau_{k_n}^n,\ n=1,2,\dots
\]
be a sequence of random partitions tending to infinity as in \pref{Pic3.2}. Then we can define the quadratic variation of $b$ with respect to a connection rule $\gamma$ by the limit as $n\to \infty$ in probability of
\[
\sum_{k=0}^{k_n-1}b_{\tau_k^{n}\land t}(\gamma(X_{\tau^n_k\land t},X_{\tau^n_{k+1}\land t}),\gamma(X_{\tau^n_k \land t},X_{\tau^n_{k+1}\land t}))
\]
and denote this by $\int_0^tb_{s-}\, \gamma d[X,X]_s$. Furthermore, set
\begin{align*}
\int_0^tb_{s-}\, \gamma d[X,X]^d_s&:=\sum_{0<s\leq t}b_{s-}(\gamma (X_{s-},X_s),\gamma(X_{s-},X_s)),\\
\int_0^tb_{s-}\, d[X,X]^c_s&:=\int_0^tb_{s-}\, \gamma d[X,X]_s-\int_0^tb_{s-}\, \gamma d[X,X]^d_s.
\end{align*}
Then $\displaystyle \int b_-\ d[X,X]^c$ is continuous and does not depend on  the choice of a connection rule. See Proposition 3.6 of \cite{Pic1} for details.
\begin{prop}[\cite{Pic1}, Proposition 3.7]\label{Ito1}
Let $X$ be an $M$-valued semimartingale. Then for each connection rule $\gamma$ and $f\in C^2(M)$, it holds that
\begin{align}
f(X_t)=f(X_0)+\int_0^tdf(X_{s-})\, \gamma dX_s+\frac{1}{2}\int_0^t \nabla df(X_{s-})\, d[X,X]^c_s\nonumber \\
+\sum_{0<s\leq t} \{ f(X_s)-f(X_{s-})-\langle df(X_{s-}), \gamma (X_{s-}, X_s)\rangle \}.\label{Itosformula}
\end{align}
\end{prop}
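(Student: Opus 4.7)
My plan is to reduce to a local chart using the atlas and stopping times of \pref{Pic3.2}, apply the classical multidimensional It\^o formula for c\`{a}dl\`{a}g semimartingales to $f$ in coordinates, and then rewrite each term back in intrinsic form using the Riemann-sum characterisations of \psref{Pic3.2}, \ref{Pic3.5}, and \ref{Pic3.6}.

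First I would fix a connection rule $\gamma$ inducing $\nabla$ together with an atlas $\{U_i\}$ on which $\gamma$ is $C^2$, as in \pref{Pic3.2}, and use the associated stopping times $\theta_i$ to reduce to the case where $X$ stays in a single chart $U$ with coordinates $x^1,\dots,x^d$. Extending the coordinate functions to globally smooth ones by a cutoff, the components $X^i:=x^i\circ X$ are real-valued c\`{a}dl\`{a}g semimartingales by \dref{Pic3.1}(1), and classical It\^o gives
\begin{align*}
f(X_t)-f(X_0) &= \sum_i\int_0^t \partial_i f(X_{s-})\,dX^i_s + \tfrac12\sum_{i,j}\int_0^t \partial_{ij}f(X_{s-})\,d[X^i,X^j]^c_s \\
&\quad + \sum_{0<s\le t}\bigl\{f(X_s)-f(X_{s-})-\partial_i f(X_{s-})\,(X^i_s-X^i_{s-})\bigr\}.
\end{align*}

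Next I would identify the intrinsic terms. Expanding
\[
\gamma(x,y)=(y^i-x^i)\partial_i|_x + \tfrac12 c^i_{jk}(x)(y^j-x^j)(y^k-x^k)\partial_i|_x + O(|y-x|^3),
\]
the fact that $\gamma$ induces the torsion-free connection $\nabla$ yields the relation $\nabla df(\partial_i,\partial_j)=\partial_{ij}f-c^k_{ij}\partial_k f$. Substituting this expansion into the Riemann sums defining $\int df\,\gamma dX$ in \pref{Pic3.2}, and using \pref{Pic3.6} to pass to the limit in the quadratic contributions, one obtains
\begin{align*}
&\int_0^t df(X_{s-})\,\gamma dX_s + \tfrac12\int_0^t \nabla df(X_{s-})\,d[X,X]^c_s \\
&\quad = \sum_i\int_0^t\partial_i f(X_{s-})\,dX^i_s + \tfrac12\sum_{i,j}\int_0^t\partial_{ij}f(X_{s-})\,d[X^i,X^j]^c_s + R_t,
\end{align*}
where $R_t$ collects an absolutely convergent series of jump contributions. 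Finally, \pref{Pic3.5}(1) rewrites $\int df\,\gamma dX$ as $\int df\,dX - \sum\langle df(X_{s-}),\Delta X_s-\gamma(X_{s-},X_s)\rangle$; combined with $R_t$ and the Euclidean jump remainder from classical It\^o, this telescopes into the intrinsic jump sum $\sum\{f(X_s)-f(X_{s-})-\langle df(X_{s-}),\Delta X_s\rangle\}$, giving \eqref{Itosformula}.

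The main obstacle is the careful bookkeeping of jumps. The classical formula produces jumps measured via the Euclidean differences $X^i_s-X^i_{s-}$, while \eqref{Itosformula} records them via the intrinsic tangent vector $\Delta X_s\in T_{X_{s-}}M$. Reconciling the two requires simultaneously handling the jump correction provided by \pref{Pic3.5}(1), the second-order Taylor remainder in the expansion of $\gamma$, and the classical Taylor remainder for $f$. The absolute convergence needed to rearrange the resulting series is exactly what is guaranteed by \dref{Pic3.1}(iv). Once this bookkeeping is completed in one chart, chart-independence of the final identity is automatic, since every term on the right-hand side of \eqref{Itosformula} is intrinsically defined.
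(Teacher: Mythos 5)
The paper does not prove this proposition; it is quoted verbatim from Picard (\cite{Pic1}, Proposition 3.7), so there is no in-paper argument to compare against. Your proposal is correct and follows essentially the same route as Picard's original proof: localize via the chart stopping times, apply the classical c\`adl\`ag It\^o formula in coordinates, expand $\gamma(x,y)^k=(y^k-x^k)+\tfrac12\Gamma^k_{ij}(x)(y^i-x^i)(y^j-x^j)+o(|y-x|^2)$ to match the Riemann sums of \pref{Pic3.2} with the coordinate integrals, and use \pref{Pic3.5}(1) together with condition (iv) of \dref{Pic3.1} to reconcile the Euclidean and intrinsic jump corrections.
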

For a semimartingale $X$, a connection rule $\gamma$ and $f\in C^2(M)$, we use the following notation:
\[
N^f(X)=N^f:=\int df(X_-)\, \gamma dX.
\]
\section{Proofs of the main theorems}\label{main}
\subsection{Proof of \tref{conv2}}
In this section, we start with a review of the definition of $\gamma$-martingales with jumps introduced in \cite{Pic1}. Then we prove the convergence of martingales with jumps on manifolds. The main idea of the proof of our theorem is based on \cite{Dar2}.
\begin{dfn}[\cite{Pic1}, Definition 4.1]
Let $M$ be a manifold with a connection rule $\gamma$ and $X$ an $M$-valued semimartingale. We call $X$ a \textbf{$\bm{\gamma}$-martingale} if for all $T^*M$-valued c\`adl\`ag processes $\alpha _s$ above $X_-$, $\displaystyle \int \alpha _{s-}\, \gamma dX_s$ is a local martingale.
\end{dfn}
In \cite{Pic1}, it is mentioned that a semimartingale $X$ is a $\gamma$-martingale if and only if $N^f(X)$ is a local martingale for all $f\in C^{\infty}(M)$. Furthermore we only have to check this for a finite subset of $C^{\infty}(M)$ by the following lemma.
\begin{lem}\label{martingalelem1}
Take a Riemannian metric $g$ on $M$ and an isometric immersion $\iota:M\to \mathbb{R}^d$. Let $\gamma$ be a connection rule on $M$ which induces the Levi-Civita connection with respect to $g$. Then the following are equivalent.
\begin{itemize}
\item[(a)]For all $f\in C^{\infty}(M)$, $N^f(X)$ is a local martingale.
\item[(b)]For each $i=1,\dots,d$, $N^{\iota^i}(X)$ is a local martingale.
\item[(c)]$X$ is a $\gamma$-martingale.
\end{itemize}
\end{lem}
\begin{proof}
(a)$\Rightarrow$(b) is obvious and (c)$\Rightarrow$(a) is obtained by It\^o's formula in \pref{Ito1}. We will show (b)$\Rightarrow$(c). Take any local coordinate $(x^1,\dots,x^n)$ on $M$. Then it holds that
\[
\frac{\partial}{\partial x^k}=\sum_{i=1}^n\frac{\partial \iota^i}{\partial x^k}\nabla \iota^i.
\]
Therefore for all $T^*M$-valued c\`adl\`ag processes $\alpha _s$, we obtain
\[
\int_0^t\alpha_{s-}\, \gamma dX_s=\sum_{i=1}^n\int_0^t\langle \alpha_{s-},\nabla \iota^i(X_{s-})\rangle\, dN^{\iota^i}(X)_s
\]
and the right-hand side is a local martingale.
\end{proof}
In this section, we always assume that $M$ is an $n$-dimensional complete Riemannian manifold and $\nabla$ is the Levi-Civita connection unless otherwise stated. Besides, we only consider the connection rule $\eta$ in the form of \eqref{eta}. To show the convergence theorem, we prove \lref{lem2} below needed later on. \lref{lem2} and \cref{cor2} below were essentially shown in Lemma 2 of \cite{Dar2} in the same way but we do not assume the continuity of the process $X$ in \cref{cor2}. Moreover, to prove Lemma 2 of \cite{Dar2}, actually we had to show that it sufficed to take a function $f$ from a countable subset of $C^{\infty}_0(M)$ though the process was omitted. Thus we confirm the proofs of \lref{lem2} and \cref{cor2} in this section. Since $M$ is separable, we can take a countable set $N\subset M$ satisfying $\overline{N}=M$. Let
\[
\mathbb{Q}_1=\{q\in \mathbb{Q}\mid \ 0<2q<\mathrm{diam}(M)\leq \infty \},
\]
where $\mathrm{diam}(M)$ is the diameter of $M$. For each $x \in N$ and $q\in \mathbb{Q}_1$, we choose $\phi_{q,x}\in C_0^{\infty}(M)$ satisfying
\[
\phi_{q,x}(y)=\left\{ \begin{array}{ll}
1 & (y\in B_q(x)), \\
0 & (y\in B_{2q}(x)^c),
  \end{array} \right.
\]
and
\[
0< \phi_{q,x}(y) < 1\ (y\in B_{2q}(x) \backslash B_q(x) ),
\]
where $B_q(x)$ is a geodesic ball of radius $q$ centered at $x$. Let $\Phi=\{\phi_{q,x}\mid q\in \mathbb{Q}_1, x\in N\}$. Then $\Phi$ is a countable subset of $C_0^{\infty}(M)$. Let $M_{\Delta}$ be the one-point compactification of $M$.
\begin{lem}\label{lem2}
Let $c:[0,\infty )  \to M$ be a map. Suppose for all $\phi \in \Phi$, $\displaystyle \lim_{t \to \infty} \phi(c(t))$ exists in $\mathbb{R}$. Then $\displaystyle \lim_{t \to \infty} c(t)$ exists in $M_{\Delta}$.
\end{lem}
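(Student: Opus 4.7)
I will prove the contrapositive: assuming $c(t)$ fails to converge in $M_\Delta$, I exhibit some $\phi\in\Phi$ for which $\phi(c(t))$ has no limit in $\mathbb{R}$. Because $M$ is a connected paracompact Riemannian manifold it is second countable, so $M_\Delta$ is a compact metrizable space; by sequential compactness I can extract sequences $t_n,s_n\to\infty$ with $c(t_n)\to p$ and $c(s_n)\to p'$ in $M_\Delta$ and $p\neq p'$. If I can find $\phi\in\Phi$ with $\phi(p)\neq\phi(p')$ (reading $\phi(\Delta)=0$, which is legitimate since each element of $\Phi$ has compact support in $M$), then continuity of $\phi$ forces $\phi(c(t_n))\to\phi(p)$ and $\phi(c(s_n))\to\phi(p')$, contradicting the existence of $\lim_{t\to\infty}\phi(c(t))$.

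Constructing such a $\phi$ splits into two cases according to whether $\Delta\in\{p,p'\}$. If $p,p'\in M$, set $\rho=d(p,p')>0$ and pick $q\in\mathbb{Q}_1$ with $4q<\rho$; this is possible because $\rho\leq d(M)$, so any rational $q\in(0,\rho/4)$ lies in $\mathbb{Q}_1$. Using that $N$ is dense in $M$, choose $x\in N$ with $d(x,p)<q$. Then $p\in B_q(x)$, while the triangle inequality gives $d(x,p')\geq\rho-q>3q>2q$, so $p'\notin B_{2q}(x)$, whence $\phi_{q,x}(p)=1$ and $\phi_{q,x}(p')=0$. If instead exactly one of $p,p'$ is $\Delta$, say $p'=\Delta$ and $p\in M$, pick any $q\in\mathbb{Q}_1$ and $x\in N$ with $d(x,p)<q$, so $\phi_{q,x}(p)=1$; since $c(s_n)\to\Delta$ means $c(s_n)$ eventually leaves every compact subset of $M$, in particular it leaves $\overline{B_{2q}(x)}$ eventually, and thus $\phi_{q,x}(c(s_n))=0$ for all large $n$. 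The case $p=p'=\Delta$ is excluded by $p\neq p'$.

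The proof is purely topological and does not draw on any probability or the theory of $\Delta$-semimartingales. The only point requiring genuine care is checking that the countable family $\Phi$ is still rich enough to separate any two distinct points of $M_\Delta$; this reduces to the density of $N$ in $M$ together with the density of $\mathbb{Q}$ in the admissible range of radii $(0,d(M)/2)$, both of which are built into the definition of $\Phi$ given just before the lemma, so no further obstacle arises.
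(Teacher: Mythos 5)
Your proof is correct, but it is organized differently from the paper's. You argue by contraposition: non-convergence in the compact metrizable space $M_{\Delta}$ yields two sequences $t_n,s_n\to\infty$ with distinct subsequential limits $p\neq p'$, and you then produce a single $\phi_{q,x}\in\Phi$ separating $p$ from $p'$ (using density of $N$, density of $\mathbb{Q}_1$ in the admissible radii, and, when $p'=\Delta$, compactness of $\overline{B_{2q}(x)}$ via Hopf--Rinow). The paper instead argues directly in two cases: if every $\phi_{q,x}(c(t))\to 0$ then $c(t)$ escapes every compact set and converges to $\Delta$; otherwise some $\phi_{p,x}(c(t))$ has a nonzero limit, so $c(t)$ is eventually trapped in a relatively compact ball, a subsequential limit $y$ is extracted there, and the functions $\phi_{q/2,z}$ with $z\in N$ close to $y$ are used to show $c(t)$ eventually enters $B_q(y)$ for every $q$, identifying $y$ as the actual limit. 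The substance is the same --- both proofs rest on $\Phi$ being a countable family that separates points of $M_{\Delta}$ and detects escape to infinity --- but your version gets a cleaner logical skeleton at the price of invoking metrizability and sequential compactness of $M_{\Delta}$ (which needs second countability of $M$, true here), while the paper's direct argument exhibits the limit explicitly without metrizing the compactification. Both are valid; yours is slightly more economical in the separation step, the paper's is more self-contained topologically.
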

\begin{proof}
Suppose for all $\phi _{q,x}\in \Phi$,  $\displaystyle \lim_{t \to \infty} \phi_{q,x}(c(t))=0$. Then for every compact set $K$,  $c(t)$ lies outside of $K$ for all sufficiently large $t$. Therefore $\displaystyle \lim_{t \to \infty} c(t)$ is the point at infinity.\\
\indent On the other hand, if $\displaystyle \lim_{t \to \infty} \phi_{p,x}(c(t))\neq 0$ for some $\phi_{p,x}\in \Phi$, then $c(t)\in B_{2p}(x)$ for all sufficiently large $t$. Because $\overline{B_{2p}(x)}$ is compact, we can choose a sequence $\{t_k\}$ such that $t_k\to \infty$ as $k\to \infty$ and the sequence $\{c(t_k) \}$ converges in $B_{2p}(x)$. Let $\displaystyle y=\lim_{k\to \infty}c(t_k)$. For $q\in \mathbb{Q}_1$, we can choose $z\in N$ satisfying $\displaystyle d(z,y)<\frac{q}{4}$. By assumption, $\displaystyle \lim_{t\to \infty} \phi_{\frac{q}{2},z}(c(t))$ exists. On the other hand, $\displaystyle \lim_{k\to \infty} c(t_k)=y\in B_{\frac{q}{2}}(z)$. Hence
\[
\lim_{k\to \infty}\phi_{\frac{q}{2},z}(c(t_k))=\phi_{\frac{q}{2},z}(y)=1.
\]
Therefore
\[
\lim_{t\to \infty}\phi_{\frac{q}{2},z}(c(t))=1
\]
because it does not depend on the way to choose a sequence $\{t_k\}$.
Therefore $\displaystyle c(t)\in B_{\frac{q}{2}}(z)\subset B_q(y)$ for all sufficiently large $t$. Since $q\in \mathbb{Q}_1$ is arbitrary, we have $\displaystyle \lim_{t \to \infty} c(t)=y$.
\end{proof}
\begin{cor}\label{cor2}
Let $X$ be an $M$-valued process. Suppose for all $f\in C^{\infty}_0(M)$,
\[
\mathbb{P}\left( \lim_{t\to \infty} f(X_t)\ \text{exists in}\ \mathbb{R}\right) =1.
\]
Then $\displaystyle \lim_{t \to \infty} X_t$ exists in $M_{\Delta}$ almost surely.
\end{cor}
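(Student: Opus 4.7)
The plan is to deduce the corollary from \lref{lem2} by a standard countable-intersection argument, since the family $\Phi$ constructed just before \lref{lem2} is a countable subset of $C_0^\infty(M)$.

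First I would fix, for each $\phi \in \Phi$, the $P$-null set $N_\phi$ on whose complement the real-valued limit $\lim_{t\to\infty}\phi(X_t)$ exists; this is provided directly by the hypothesis applied to $f=\phi$. Since $\Phi$ is countable, the union $N := \bigcup_{\phi\in\Phi} N_\phi$ is again a $P$-null set. On the complement $\Omega \setminus N$, every sample path $t\mapsto X_t(\omega)$ has the property that $\lim_{t\to\infty}\phi(X_t(\omega))$ exists in $\mathbb{R}$ simultaneously for \emph{all} $\phi \in \Phi$.

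Next I would apply \lref{lem2} pathwise. For each $\omega\in \Omega\setminus N$, define $c_\omega(t) := X_t(\omega)$; this is a map $[0,\infty)\to M$ for which all hypotheses of \lref{lem2} are satisfied. Hence $\lim_{t\to\infty} c_\omega(t) = \lim_{t\to\infty} X_t(\omega)$ exists in $M_\Delta$. Since $P(\Omega\setminus N)=1$, this gives the almost sure convergence of $X_t$ in $M_\Delta$.

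There is no real obstacle here: the lemma does the geometric work, and the only thing to verify is that we may pass from an almost sure statement for each fixed $f\in C_0^\infty(M)$ to one that holds simultaneously over the whole test family $\Phi$. The countability of $\Phi$ (which is why $\mathbb{Q}_1$ and the countable dense set $N\subset M$ were introduced in the construction preceding \lref{lem2}) is precisely what makes this passage legitimate.
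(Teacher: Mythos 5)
Your proposal is correct and coincides with the paper's own proof: both take the countable union of the exceptional null sets over $\Phi$ and then apply \lref{lem2} pathwise on the complement. Nothing further is needed.
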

\begin{proof}
By assumption, $\displaystyle \lim_{t\to \infty}\phi_{q,x}(X_t)$ exists almost surely for any $\phi_{q,x}\in \Phi$. Let $\Gamma_{q,x}$ be the exceptional set and $\displaystyle \Gamma =\bigcup_{q,x} \Gamma_{q,x}$. Then $\Gamma$ is a null set and for any $\omega \in \Omega \, \backslash \, \Gamma$ and $\phi_{q,x} \in \Phi$, $\displaystyle \lim_{t\to \infty}\phi_{q,x}(X_t(\omega))$ exists. Therefore $\displaystyle \lim_{t\to \infty} \phi_{q,x}(X_t)$ exists on $\Omega \, \backslash \,  \Gamma$ and this proves the corollary by \lref{lem2}.
\end{proof}

\begin{proof}[Proof of \tref{conv2}]
For $f\in C_0^{\infty}(M)$, we can construct a function $\bar f \in C_0(\mathbb{R}^n)$ satisfying
\begin{gather}
f(x)=\bar f(x),\ \nabla f(x)=D \bar f(x)\label{nabla},\ x\in M,
\end{gather}
where $D \bar f$ is the gradient of $\bar f$ on $\mathbb{R}^n$. Indeed, let $T^{\perp}M$ be a normal bundle of $M$ and $\exp^{\perp}$ a normal exponential map. Then it is well known that there exists a neighborhood $\mathcal{U}$ of the zero section of $T^{\perp}M$ such that $\exp^{\perp}:\mathcal{U}\to \exp^{\perp}(\mathcal{U})$ is a diffeomorphism. Set $\phi :=f\circ \pi_{T^{\perp}M}$, where $\pi_{T^{\perp}M}:T^{\perp}M\to M$ is a projection and $\bar f:=\phi \circ (\exp^{\perp})^{-1}$. Then $\bar f$ is a differentiable function on $\exp^{\perp}(\mathcal{U})$ and \eqref{nabla} holds. By extending $\bar f$ to the outside of $\exp^{\perp}(\mathcal{U})$ as a differential function on $\mathbb{R}^n$, we obtain a desired extension of $f$. Let $\iota^1,\dots,\iota^n$ be the coordinate functions, that is, $\iota(x)=(\iota^1(x),\dots,\iota^n(x))\in \mathbb{R}^n$ for $x\in M$. Then $[X,X]=[\iota(X),\iota(X)]$. Let
\begin{align*}
N_t &= \int_{0+}^t\partial_i \bar f(X_{s-})\, d\iota^i(X)_s,\\
A_t &= \frac{1}{2}\int_{0+}^t \partial^2_{ij}\bar f(X_{s-})\, d[\iota^i(X)^c,\iota^j(X)^c]_s,\\
B_t &= \sum_{0<s\leq t}\{ \bar f(X_s) -\bar f(X_{s-})-\langle D\bar f(X_{s-}),\Delta \iota (X)_s\rangle_{\mathbb{R}^n} \},
\end{align*}
where $\iota^i(X)^c$ is a continuous local martingale part of $\iota^i(X)$. Then by It\^o's formula for semimartingales on $\mathbb{R}^n$, it holds that
\begin{align*}
f(X_t)-f(X_0)&=\bar f(X_t)-\bar f(X_0)\\
&=N_t+A_t+B_t.
\end{align*}
First, we will show the convergence of $N_t$. For stopping times $\sigma,\tau$ with $\sigma \leq \tau$, it holds that
\begin{align*}
\langle D \bar f(X_{\sigma}),\iota (X_{\tau})-\iota(X_{\sigma})\rangle_{\mathbb{R}^n}=\langle df(X_{\sigma}),\eta (X_{\sigma},X_{\tau})\rangle
\end{align*}
by \eqref{nabla}. Therefore
\begin{gather*}
N_t=N^f_t
\end{gather*}
and consequently, it is a local martingale. Since the support of $\bar{f}$ is compact, there exists $K\geq 0$ such that for all $x\in \mathbb{R}^d$,
\[
\| D\bar f(x)\| \leq K,\ \| \mathrm{Hess}(\bar f)(x)\| \leq K.
\]
Thus it holds that
\begin{align*}
[N,N]_t&=\int_0^t\partial_i \bar f(X_{s-})\, \partial_j \bar f(X_{s-})\, d[\iota^i(X),\iota^j(X)]_s\\
&\leq K^2[\iota (X),\iota(X)]_t,\\
\| A\|_t& \leq \frac{K}{2}[\iota(X),\iota(X)]^c_t.
\end{align*}
In a similar way, we obtain 
\begin{gather*}
\|B\|_t\leq \frac{K}{2}\sum _{0<s\leq t}|\Delta \iota (X)_s|^2.
\end{gather*}
Hence $N_t$, $A_t$ and $B_t$ converge almost surely as $t\to \infty$. Therefore $f(X)$ also converges almost surely and consequently, $X_{\infty}$ exists in $M_{\Delta}$ almost surely by \cref{cor2}.
\end{proof}

\begin{rem}\label{counterexample}
If we replace $[X,X]$ by the Riemannian quadratic variation $\int g(X_-)\, \gamma d[X,X]$ in the assumption for the quadratic variation, \tref{conv2} does not hold in general. In fact, we can construct martingales with integrable Riemannian quadratic variations on
\[
S^1=\{(x^1,x^2)\in \mathbb{R}^2\mid (x^1)^2+(x^2)^2=1\}
\]
which do not converge. Let $N_t$ be a Poisson process with $\mathbb{E}\left[ N_1 \right] =\lambda >0$. Let
\[
X_t=e ^{i \pi N_t}.
\]
Then $X$ is a $\gamma$-martingale since $\gamma(X_-,X)=0$. Thus the Riemannian quadratic variation of $X$ is $\int g(X_-)\, \gamma d[X,X]=0$. However obviously $X$ does not converge as $t\to \infty$. 
\end{rem}

\subsection{Proof of \tref{convtto0}}
First we define martingales with end points.
\begin{dfn}
Let $X$ be an $\mathbb{R}^d$-valued semimartingale, $\zeta$ an $\{\mathcal{F}_t\}_{t\geq 0}$-stopping time and $p$ a point in $\mathbb{R}^d$. We call $(X,\zeta, p)$ an \textbf{$\bm{M}$-valued semimartingale with the end point $\bm{p}$} if it satisfies $X_t\in M$ for $t\in [0,\zeta)$ and $X_t=p$ for $t \geq \zeta$ a.s.
\end{dfn}
If $p\in M$ or $\zeta=\infty$ almost surely, then an $M$-valued semimartingale $X$ with the end point $p$ is just an $M$-valued semimartingale.
\begin{rem}
An $M$-valued semimartingale $X$ with an end point $p \in \mathbb{R}^d$ is not an $M$-valued process in a strict sense, but we can define the stochastic integral for each vector field $V$ on $M$ along $\{X_t\}_{t\geq 0}$. Indeed, $V$ can be extended to a map $\bar{V}\colon \mathbb{R}^d\to \mathbb{R}^d$ and we can define the stochastic integral $\displaystyle \int \langle \bar{V}(X_-), dX\rangle$. This is independent of the extension since for $t\in [0,\zeta )$, $X_t \in M$ and for $t\geq \zeta$, $X_t=p$. Thus we denote the integral simply by $\displaystyle \int \langle V(X_-), dX\rangle$.
\end{rem}
\begin{dfn}
Let $(X,\zeta, p)$ be an $M$-valued $\{\mathcal{F}_t\}_{t\geq 0}$-semimartingale with the end point $p$. The triple $(X,\zeta, p)$ is called an \textbf{$\bm{M}$-valued $\bm{\eta}$-martingale with the end point $\bm{p}$} if for any bounded vector field $V$ on $M$, the stochastic integral $\displaystyle \int \langle V(X_-), dX \rangle$ is a local martingale.
\end{dfn}
\begin{rem}
The same argument as \lref{martingalelem1} still holds for a martingale with an end point. We can prove this by replacing $\iota$ by $\bar{\iota}$ which is an extension of $\iota$ satisfying \eqref{nabla}.
\end{rem}
\begin{dfn}
Let $\{X_t\}_{t>0}$ be a process indexed by $t\in (0,\infty)$. Suppose that there exist $p\in \mathbb{R}^d$ and an $\{\mathcal{F}_t\}_{t\geq 0}$-stopping time $\zeta$ such that $X_t \in M$ for $t\in [0,\zeta)$ and $X_t=p$ for $t\geq \zeta$. We call $(\{X_t\}_{t>0},\zeta,p)$ an \textbf{$\bm{M}$-valued $\bm{\eta}$-martingale with an end point indexed by $\bm{t\in (0,\infty)}$} if for all $\varepsilon>0$, $(\{X_{t+\ep}\}_{t\geq 0},(\zeta-\ep)\lor 0, p)$ is an $M$-valued $\{\mathcal{F}_{t+\ep} \}_{t\geq 0}$-martingale with the end point $p$, where $s\lor t=\max \{s,t \}$ for $s,t\geq 0$.
\end{dfn}
For an $\mathbb{R}$-valued semimartingale $X=\{X_t \}_{t>0}$, i.e. indexed by $t\in (0,\infty)$, we can define the integral along $X$ on $(\varepsilon, t]$ for $\ep>0$. In particular we denote the quadratic variation of $X$ on $(\varepsilon,t]$ by
\[
[X,X]_t^{\varepsilon}:=\int_{(\varepsilon,t]}d[X,X]_s.
\]
We will use the following notation: For an $\{\mathcal{F}_t\}_{t\geq 0}$-adapted process $H$ and an $\{\mathcal{F}_t\}_{t\geq 0}$-stopping time $\tau$, we write
\[
H^{\tau}_t(\omega)=H_{t\land \tau(\omega)}(\omega)\ \text{for}\ t\geq 0,\ \omega \in \Omega.
\]
\begin{proof}[Proof of \tref{convtto0} (2)]
Since $X_t \in M$ for sufficiently small $t>0$, it suffices to show that $\displaystyle \lim_{t\to 0}\bar{f}(X_t)$ exists almost surely for all $f\in C^{\infty}(M)$ by \cref{cor2}, where $\bar{f}$ is an extension of $f$ satisfying \eqref{nabla}. By It\^o's formula, it holds that
\begin{align}
\bar{f}(X_t)-\bar{f}(X_{\varepsilon})&=\int_{(\varepsilon, t]} \langle D\bar{f}(X_{s-}), dX_s\rangle + \frac{1}{2}\int_{(\varepsilon, t]}\mathrm{Hess}\bar{f}(X_{s-})\, d[X,X]^c_s \nonumber \\
&\h +\sum_{\varepsilon < s \leq t}\{\bar{f}(X_s)-\bar{f}(X_{s-})-\langle D\bar{f}(X_{s-}), \eta(X_{s-},X_s) \rangle \}.\label{itoepsilon}
\end{align}
Since it holds that
\begin{align*}
&\int_{(\varepsilon, t]}|\mathrm{Hess}\bar{f}(X_{s-})|\, |d[X,X]^c_s| \leq \lim_{\varepsilon \to 0}C[X,X]^{c, \varepsilon}_t,\\
&\sum_{\varepsilon < s \leq t}| \bar{f}(X_s)-\bar{f}(X_{s-})-\langle D\bar{f}(X_{s-}), \eta(X_{s-},X_s) \rangle |\leq \lim_{\varepsilon \to 0}C[X,X]^{d, \varepsilon}_t,
\end{align*}
for some $C>0$, we can define
\begin{align*}
A_t&:=\lim_{\varepsilon \to 0} \frac{1}{2}\int_{(\varepsilon,t]}\mathrm{Hess} \bar{f}(X_{s-})\, d[X,X]^c_s,\\
B_t&:=\lim_{\varepsilon \to 0}\sum_{\varepsilon < s \leq t}\{ \bar{f}(X_s)-\bar{f}(X_{s-})-\langle D\bar{f}(X_{s-}), \eta(X_{s-},X_s) \rangle \}
\end{align*}
for $t>0$. We set $H_t:=\bar{f}(X_t)-A_t-B_t$ for $t>0$. Then it holds that
\[
H_t-H_{\varepsilon}=\int_{(\varepsilon, t]} \langle D\bar{f}(X_{s-}), dX_s\rangle,\ 0<\varepsilon \leq t.
\]
Thus $\{H_t\}_{t>0}$ is a local martingale on $(0,\infty)$. Furthermore, since the jumps of $B$ are uniformly bounded by the compactness of $M$, we can take an increasing sequence of stopping times $\tau_1\leq \tau_2 \leq \dots \leq \tau_n \leq \dots \to \infty$ almost surely such that $A^{\tau_n}\mathbf{1}_{\{ \tau_n>0\}}$ and $B^{\tau_n}\mathbf{1}_{\{\tau_n>0\}}$ are bounded. Thus $H^{\tau_n}\mathbf{1}_{\{\tau_n>0\}}$ is a bounded martingale on $(0,\infty)$ for each $n\in \mathbb{N}$ and can be extended to a bounded martingale on $[0,\infty)$. In particular, $\displaystyle \lim_{t\to 0}H_t$ exists almost surely on $\{\tau_n >0 \}$ for each $n$. Thus $\displaystyle \lim_{t\to 0}\bar{f}(X_t)$ exists in $\mathbb{R}$ almost surely and this completes the proof. 
\end{proof}
\begin{proof}[Proof of \tref{convtto0} (1)]
Suppose $(\{X_t\}_{t>0},p,\zeta)$ is an $M$-valued $\eta$-martingale with an end point and the limit $\displaystyle X_0:=\lim_{t\to 0}X_t$ exists in $M$. We will show that $(\{X_t\}_{t\geq 0},p,\zeta)$ is an $M$-valued $\eta$-martingale with an end point. Let $\iota \colon M \to \mathbb{R}^d$ be an inclusion map and $\bar{\iota}=(\bar{\iota^1},\dots,\bar{\iota^d})$ an extension of $\iota$ satisfying \eqref{nabla}. Since $M$ is compact, $a:=\displaystyle \sum_{i=1}^d\sup_{x\in M}\| \mathrm{Hess} \bar{\iota^i} (x)\|$ is finite. Take $0<C < \frac{1}{2d e^2 a}$. First we set
\begin{align*}
\tau&:=\inf \{t\geq 0\mid |\bar{\iota}(X_t)-\bar{\iota}(X_0)|\geq C \},\\
Y_t&:=\frac{1}{C}(\bar{\iota}(X_t)-\bar{\iota}(X_0))^{\tau}.
\end{align*}
Then $|Y_t|\leq 1$ for $0\leq t <\tau$. To show $\{ \bar{\iota}(X_t)\}_{t\geq 0}$ is a semimartingale, we will use
\[
Z^i_t:=e^{Y^i_t},\ Z_t:=\sum_{i=1}^d Z^i_t.
\]
By It\^o's formula, it holds that
\begin{align}
Z^i_{t}-Z^i_{\varepsilon}&=\int_{(\varepsilon, t]}e^{Y^i_{s-}}\, dY^i_s +\frac{1}{2}\int_{(\varepsilon, t]}e^{Y^i_{s-}}\, d[Y^i,Y^i]^c_s \nonumber \\
&\h +\sum_{\varepsilon<s\leq t}\left(e^{Y^i_s}-e^{Y^i_{s-}}-e^{Y^i_{s-}}\Delta Y^i_s \right) \nonumber \\
&=\frac{1}{C} \int_{(\varepsilon, t]}e^{Y^i_{s-}}\, d\bar{\iota^i}(X)^{\tau}_s +\frac{1}{2C^2}\int_{(\varepsilon, t]}e^{Y^i_{s-}}\, d[\bar{\iota^i}(X)^{\tau},\bar{\iota^i}(X)^{\tau}]^c_s \nonumber \\
&\h +\sum_{\varepsilon<s\leq t}\left(e^{Y^i_s}-e^{Y^i_{s-}}-e^{Y^i_{s-}}\Delta Y^i_s \right) \nonumber \\
&=\frac{1}{C} \int_{(\varepsilon, t]}e^{Y^i_{s-}}\, d\left( \int_{(\varepsilon, s]}D_j \bar{\iota ^i}(X_{u-})^{\tau}\, d\bar{\iota^j}(X)^{\tau}_u\right) \nonumber \\
&\h + \frac{1}{2C}\int_{(\varepsilon,t]} e^{Y^i_{s-}} D_jD_k\bar{\iota^i}(X_{s-})^{\tau}\, d[\bar{\iota^j}(X)^{\tau},\bar{\iota^k}(X)^{\tau}]^c_s \nonumber \\
&\h +\frac{1}{2C^2}\int_{(\varepsilon, t]}e^{Y^i_{s-}}\, d[\bar{\iota^i}(X)^{\tau},\bar{\iota^i}(X)^{\tau}]^c_s \nonumber \\
&\h +\frac{1}{C}\sum_{\varepsilon<s\leq t}\{ e^{Y^i_{s-}}\left(\bar{\iota^i}(X_{s})^{\tau}-\bar{\iota^i}(X_{s-})^{\tau}-\langle D\bar{\iota^i}(X_{s-})^{\tau},\Delta \bar{\iota} (X_s)^{\tau} \rangle \right) \} \nonumber \\
&\h + \sum_{\varepsilon<s\leq t}\left(e^{Y^i_s}-e^{Y^i_{s-}}-e^{Y^i_{s-}}\Delta Y^i_s \right).\label{Zt}
\end{align}
Here the first term of the right-hand side of \eqref{Zt} is a local martingale. We further set
\begin{align*}
J^{i, \varepsilon}_t&:=\frac{1}{C}\mathbf{1}_{\{\varepsilon < \tau \leq t\}}\left( e^{Y^i_{\tau-}}\left( \bar{\iota^i}(X_{\tau}) - \bar{\iota^i}(X_{\tau-}) - \langle D\bar{\iota^i}(X_{\tau-}), \Delta \bar{\iota} (X_\tau) \rangle \right)\right),\ t\geq \varepsilon>0,\\
J^{\varepsilon}_t&:=\sum_{i=1}^dJ_t^{i,\varepsilon},\ t\geq \varepsilon>0, \\
K^{i, \varepsilon}_t&:=\mathbf{1}_{\{\varepsilon < \tau \leq t\}}\left( e^{Y^i_{\tau}}-e^{Y^i_{\tau-}}-e^{Y^i_{\tau-}}\Delta Y^i_{\tau} \right),\ t\geq \varepsilon>0, \\
K^{\varepsilon}_t&:=\sum_{i=1}^dK^{i,\varepsilon}_t,\ t\geq \varepsilon>0.
\end{align*}
Then $J^{\varepsilon}$ and $K^{\varepsilon}$ are the jumps at $t=\tau$ of the discontinuous locally bounded variation parts of \eqref{Zt}. We define the two processes of locally finite variation as follows:
\begin{align*}
A^{\varepsilon}_t&=\sum_{i=1}^d\left(\frac{1}{2C}\int_{(\varepsilon,t]} e^{Y^i_{s-}} D_jD_k\bar{\iota^i}(X_{s-})^{\tau}\, d[\bar{\iota^j}(X_{s-}),\bar{\iota^k}(X_{s-})]^c_s\right. \nonumber \\
&\h \left. +\frac{1}{2C^2}\int_{(\varepsilon, t]}e^{Y^i_{s-}}\, d[\bar{\iota^i}(X)^{\tau},\bar{\iota^i}(X)^{\tau}]^c_s \right),\\
B^{\varepsilon}_t&=\sum_{i=1}^d\left( \frac{1}{C}\sum_{\varepsilon<s\leq t}\{ e^{Y^i_{s-}}\left(\bar{\iota^i}(X_{s})^{\tau}-\bar{\iota^i}(X_{s-})^{\tau}-\langle D\bar{\iota^i}(X_{s-})^{\tau},\Delta \bar{\iota} (X_s)^{\tau}\rangle \right) \} \right. \nonumber \\
&\h \left.+ \sum_{\varepsilon<s\leq t}\left(e^{Y^i_s}-e^{Y^i_{s-}}-e^{Y^i_{s-}}\Delta Y^i_s \right) \right)-J^{\varepsilon}_t-K^{\varepsilon}_t.
\end{align*}
Then it holds that for $\varepsilon < s\leq \tau$,
\begin{align}
dA^{\varepsilon}_s&=\sum_{i=1}^d \left( \frac{1}{2C}e^{Y^i_{s-}} D_jD_k\bar{\iota^i}(X_{s-})^{\tau}\, d[\bar{\iota^j}(X)^{\tau},\bar{\iota^k}(X)^{\tau}]^c_s+\frac{1}{2C^2}e^{Y^i_{s-}}\, d[\bar{\iota^i}(X)^{\tau},\bar{\iota^i}(X)^{\tau}]^c_s \right) \nonumber \\
&\geq \sum_{i=1}^d\left( -\frac{ea}{C}  d[\bar{\iota}(X)^{\tau}, \bar{\iota}(X)^{\tau}]^c_s + \frac{e^{-1}}{2C^2} d[\bar{\iota^i}(X)^{\tau},\bar{\iota^i}(X)^{\tau}]^c_s\right) \nonumber \\
&= \left( -\frac{dea}{C} + \frac{e^{-1}}{2C^2} \right)d[\bar{\iota}(X)^{\tau}, \bar{\iota}(X)^{\tau}]^c_s. \label{Ageq}
\end{align}
Furthermore, it holds that for $\varepsilon < s \leq \tau$,
\begin{align}
dB^{\varepsilon}_s &= \sum_{i=1}^d\frac{e^{Y^i_{s-}}}{C}\left(\bar{\iota^i}(X_{s})^{\tau}-\bar{\iota^i}(X_{s-})^{\tau}-\langle D\bar{\iota^i}(X_{s-})^{\tau},\Delta \bar{\iota} (X_s)^{\tau}\rangle \right)\mathbf{1}_{\{s<\tau\}} \nonumber \\
&\h +\sum_{i=1}^d\left(e^{Y^i_s}-e^{Y^i_{s-}}-e^{Y^i_{s-}}\Delta Y^i_s\right)\mathbf{1}_{\{s<\tau\}} \nonumber \\
&\h +\sum_{i=1}^d\frac{e^{Y^i_{\tau-}}}{C} \left(\bar{\iota^i}(X_{\tau})-\bar{\iota^i}(X_{\tau-})-\langle D\bar{\iota^i}(X_{\tau-}),\Delta \bar{\iota} (X_{\tau})\rangle \right)\mathbf{1}_{\{s=\tau\}} \nonumber \\
&\h +\sum_{i=1}^d \left(e^{Y^i_{\tau}}-e^{Y^i_{\tau-}}-e^{Y^i_{\tau-}}\Delta Y^i_s\right)\mathbf{1}_{\{s=\tau\}} -dJ^{\varepsilon}_s-dK^{\varepsilon}_s\nonumber \\
&\geq - \frac{de a}{C}|\Delta \bar{\iota} (X_s)|^2\mathbf{1}_{\{s<\tau\}}\nonumber \\
&\h + \inf \left\{ \frac{e^y-e^x-e^x(y-x)}{(y-x)^2}\mid x,y\in \mathbb{R}, x\neq y,  |x|, |y| \leq 1 \right\} \sum_{i=1}^d|\Delta Y^i_s |^2\mathbf{1}_{\{s<\tau\}} \nonumber \\
& \geq \left( -\frac{de a}{C} + \frac{e^{-1}}{2C^2}\right) |\Delta \bar{\iota}(X_s)|^2\mathbf{1}_{\{s<\tau\}}. \label{Bgeq}
\end{align}
Thus $A^{\ep}$ and $B^{\ep}$ are increasing processes by our choice of $C$, i.e. $0<C<\frac{1}{2de^2a}$. We further set
\begin{align*}
J_t&:=\sum_{i=1}^d\mathbf{1}_{\{\tau \leq t\}}\left( e^{Y^i_{\tau-}}\left( \bar{\iota^i}(X_\tau) - \bar{\iota^i}(X_{\tau-}) - \langle D\bar{\iota^i}(X_{\tau-}), \Delta \bar{\iota} (X)_{\tau} \rangle \right)\right),\ t\geq 0,\\
K_t&:=\sum_{i=1}^d\mathbf{1}_{\{\tau \leq t\}}\left( e^{Y^i_{\tau}}-e^{Y^i_{\tau-}}-e^{Y^i_{\tau-}}\Delta Y^i_{\tau} \right),\ t\geq 0.
\end{align*}
Then by \eqref{Ageq} and \eqref{Bgeq}, we can deduce that $Z_t - J_t - K_t$ is a bounded submartingale on $[\varepsilon,\infty)$ for all $\varepsilon>0$ since it holds that
\begin{align}
Z_t-J_t-K_t &= Z_t -J_{\varepsilon} -J^{\varepsilon}_t - K_{\varepsilon} -K^{\varepsilon}_t \nonumber \\
&= \frac{1}{C} \int_{(\varepsilon, t]}e^{Y^i_{s-}}\, d\left( \int_{(\varepsilon, s]}D_j \bar{\iota ^i}(X_{u-})^{\tau}\, d\bar{\iota^j}(X)^{\tau}_u\right)\nonumber \\
&\h +A^{\varepsilon}_t+B^{\varepsilon}_t-J_{\varepsilon}-K_{\varepsilon}\label{Zt2}
\end{align}
for $t\geq \varepsilon>0$. Thus $Z-J-K$ can be extended to a bounded submartingale on $[0,\infty)$ and there exist a uniformly integrable martingale $L$ and a predictable integrable increasing process $R$ with $R_0=0$ such that
\[
Z_t-J_t-K_t=L_t+R_t
\]
by the Doob-Meyer decomposition. On the other hand, since the first term of \eqref{Zt2} is a local martingale with upper bound and equal to zero at $t=\varepsilon$, it is a submartingale and in particular integrable for each $t\geq \varepsilon$. Thus both $A^{\ep}_t$ and $B^{\ep}_t$ are integrable processes for each $t\geq \varepsilon$. Moreover there exists a predictable integrable increasing process $\tilde B^{\varepsilon}$ with $\tilde B^{\varepsilon}_{\ep}=0$ such that $B^{\varepsilon}-\tilde B^{\varepsilon}$ is a martingale. Therefore it holds that
\[
R_t-R_{\varepsilon}=A^{\varepsilon}_t + \tilde B^{\varepsilon}_t,\ t\geq \varepsilon
\]
by the uniqueness of the Doob-Meyer decomposition. Thus we obtain
\[
\mathbb{E}\left[ \lim_{\varepsilon \to 0}A^{\varepsilon}_t \right]<\infty,\ \mathbb{E}\left[ \lim_{\varepsilon \to 0}\tilde B^{\varepsilon}_t\right] <\infty.
\]
Since $B^{\varepsilon}_t-\tilde B^{\varepsilon}_t$ is a martingale, it holds that
\[
\mathbb{E}\left[ \lim_{\varepsilon \to 0}B^{\varepsilon}_t \right] =\mathbb{E} \left[ \lim_{\varepsilon \to 0}\tilde B^{\varepsilon}_t \right] <\infty.
\]
Thus $\displaystyle \lim_{\varepsilon \to 0}[\bar{\iota} (X)^{\tau}, \bar{\iota}(X)^{\tau}]_t^{\varepsilon}<\infty$ a.s. by \eqref{Ageq} and \eqref{Bgeq}. Moreover, for fixed $t>0$ and all $n\in \mathbb{N}$, it holds that
\begin{align*}
\lim_{\varepsilon \to 0}\mathbf{1}_{\{\frac{t}{n}<\tau \leq \frac{t}{n-1}\}}[\bar{\iota}(X),\bar{\iota}(X)]^{\varepsilon}_t&= \lim_{\varepsilon \to 0}\mathbf{1}_{\{\frac{t}{n}<\tau \leq \frac{t}{n-1}\}}\left([\bar{\iota}(X),\bar{\iota}(X)]^{\varepsilon}_{\tau \land \frac{t}{n}}+[\bar{\iota}(X),\bar{\iota}(X)]^{\frac{t}{n}}_t\right)\\
&=\lim_{\varepsilon \to 0}\mathbf{1}_{\{\frac{t}{n}<\tau \leq \frac{t}{n-1}\}}\left([\bar{\iota}(X)^{\tau},\bar{\iota}(X)^{\tau}]^{\varepsilon}_{\frac{t}{n}}+[\bar{\iota}(X),\bar{\iota}(X)]^{\frac{t}{n}}_t\right)\\
&<\infty
\end{align*}
almost surely, where we use the convention that $\displaystyle \frac{t}{0}=\infty$. Since $\tau>0$ a.s., we obtain
\begin{gather}\label{quadX}
\lim_{\varepsilon \to 0}[\bar{\iota}(X),\bar{\iota}(X)]^{\varepsilon}_t<\infty\ \text{a.s.}
\end{gather}
By \eqref{quadX}, it follows that
\begin{align*}
&\lim_{\varepsilon \to 0}\int_{(\varepsilon,t]}|D_jD_k\bar{\iota} (X_{s-})|\, |d[\bar{\iota^j}(X),\bar{\iota^k}(X)]^c_s| <\infty,\\
&\lim_{\varepsilon \to 0}\sum_{\varepsilon <s\leq t}|\bar{\iota}(X_s) - \bar{\iota}(X_{s-}) - \langle D\bar{\iota} (X_{s-}, \Delta \bar{\iota}(X_s)\rangle |<\infty
\end{align*}
almost surely. Thus we can set
\begin{align*}
N^i_t&:=\bar{\iota}(X_t)-\bar{\iota}(X_0)-\lim_{\varepsilon \to 0}\int_{(\varepsilon,t]}D_jD_k\bar{\iota} (X_{s-})\, d[\bar{\iota^j}(X),\bar{\iota^k}(X)]^c_s\\
&\h -\lim_{\varepsilon \to 0}\sum_{\varepsilon <s\leq t}\left(\bar{\iota}(X_s) - \bar{\iota}(X_{s-}) - \langle D\bar{\iota} (X_{s-}, \Delta \bar{\iota}(X_s)\rangle \right).
\end{align*}
Then it holds that
\begin{align*}
N_t-N_{\varepsilon}=\int_{(\varepsilon,t]}\langle D \bar{\iota}(X_{s-}), d\bar{\iota}(X_s)\rangle
\end{align*}
for $0<\varepsilon \leq t$ by It\^o's formula. Thus $\{N_t\}_{t\geq \varepsilon}$ is a local martingale. Furthermore, in the same way as the proof of \tref{convtto0} (2), we can take a sequence of stopping times $\{ \tau_n\}$ such that $\tau_n\to \infty$ as $n\to \infty$ a.s. and $N^{\tau_n}\mathbf{1}_{\{\tau_n>0\}}$ is bounded. Therefore $\{N_t^{\tau_n}\mathbf{1}_{\{\tau_n>0\}} \}_{t\geq \varepsilon}$ is a bounded martingale for each $\varepsilon>0$ and consequently we can deduce that $N^{\tau_n}\mathbf{1}_{\{\tau_n>0\}}$ is a bounded martingale. In particular, $\{N_t\}_{t\geq 0}$ is a local martingale and $\{\bar{\iota}(X_t)\}_{t\geq 0}$ is a semimartingale. Furthermore, we can define the stochastic integral $\int_{0}^t \langle D\bar{\iota} (X_{s-}), d\bar{\iota}(X_s)\rangle$ and this equals $N$. Thus $\{X_t\}_{t\geq 0}$ is an $\eta$-martingale with an end point by \lref{martingalelem1}.
\end{proof}
\section{Applications to harmonic maps}\label{harmonic}
We will consider applications of martingales with jumps to harmonic maps with respect to non-local Dirichlet forms. First we briefly review the theory of Dirichlet forms and Markov processes. See \cite{FOT} and \cite{ChenFuku} for details.

Let $E$ be a locally compact and separable metric space. We add a point $\Delta$ to $E$ and define $E_{\Delta}:=E\cup \{\Delta \}$. Given $f\in \mathcal{B}(E)$, we extend the domain of $f$ to $E_{\Delta}$ by $f(\Delta)=0$. Let $m$ be a positive Radon measure with full support on $E$. Let $\mathcal{F}$ be a dense subspace of $L^2(E;m)$ and $\mathcal{E}\colon \mathcal{F} \times \mathcal{F} \to \mathbb{R}$ a non-negative definite symmetric quadratic form. We set
\[
\mathcal{E}_1(u,v):=\mathcal{E}(u,v)+\langle u,v \rangle_{L^2}
\]
for $u,v\in \mathcal{F}$. The pair $(\mathcal{E}, \mathcal{F})$ is called a \textbf{Dirichlet form} if the space $(\mathcal{F}, \mathcal{E}_1)$ is a Hilbert space and it holds that $(u\lor 0)\land 1 \in \mathcal{F}$ and
\[
\mathcal{E}\left( (u\lor 0)\land 1, (u\lor 0)\land 1 \right)\leq \mathcal{E}(u,u)
\]
for any $u\in \mathcal{F}$. A Dirichlet form $(\mathcal{E},\mathcal{F})$ is called \textbf{regular} if $\mathcal{F}\cap C_0(E)$ is $\mathcal{E}_1$-dense in $\mathcal{F}$ and $\|\cdot \|_{\infty}$-dense in $C_0(E)$. 
For a regular Dirichlet form, the \textbf{extended Dirichlet space} $\mathcal{F}_e$ is defined as the family of equivalence classes of Borel functions $u\colon E\to \mathbb{R}$ with respect to the $m$-a.e. equality such that $|u|<\infty\ m$-a.e. and there exists an $\mathcal{E}$-Cauchy sequence $\{u_k\}_{k=1}^{\infty}$ in $\mathcal{F}$ such that $\displaystyle \lim_{k\to \infty}u_k=u,\ m$-a.e. For $u\in \mathcal{F}_e$, we can set
\[
\mathcal{E}(u,u):=\lim_{k\to \infty}\mathcal{E}(u_k,u_k),
\]
where $\{u_k\}_{k=1}^{\infty}$ is an $\mathcal{E}$-approximating sequence in $\mathcal{F}$ of $u$. For a regular Dirichlet form $(\mathcal{E},\mathcal{F})$, there exists an $m$-symmetric Hunt process $(\Omega, \{Z\}_{t\geq 0},\{ \theta_t\}_{t\geq 0}, \zeta, \{ \mathbb{P}_z\}_{z\in E_{\Delta}})$ on $E$ corresponding to $(\mathcal{E},\mathcal{F})$, where $\theta_t:\Omega \to \Omega$ is the shift operator satisfying for all $\omega \in \Omega$,
\begin{gather*}
Z_s\circ \theta_t(\omega)=Z_{s+t}(\omega),\ \theta_0\omega=\omega,\\
\theta_{\infty}\omega(t)=\Delta,\ \text{for all}\ t\geq 0 ,
\end{gather*}
$\zeta$ is a life time of $Z$ and $\mathbb{P}_z$ is the distribution of $\{Z_t\}$ stating at $z$. We denote the transition function of $Z$ by $\{ p_t\}_{t\geq 0}$. The $0$-resolvent of $Z$ is defined by
\[
Rf(z):=\int_0^t p_tf(z)\, dt
\]
for $f\in \mathcal{B}_+(E)$, where $\mathcal{B}_+(E)$ is the set of all non-negative Borel functions on $E$. Let
\begin{align*}
\mathcal{F}^0_{\infty}&=\sigma(Z_s;\ s< \infty),\\
\mathcal{F}^0_t&=\sigma(Z_s;\ s\leq t).
\end{align*}
For a $\sigma$-finite measure $\mu$, we write
\[
\mathbb{P}_{\mu}(\Lambda)=\int_{E_{\Delta}}\mathbb{P}_z(\Lambda)\, \mu(dz),\ \Lambda \in \mathcal{F}^0_{\infty}.
\]
In particular, if $\mu$ is a probability measure on $E$, $\mathbb{P}_{\mu}$ is a probability measure on $\mathcal{F}^0_{\infty}$. Denote the $\mathbb{P}_{\mu}$-completion of $\mathcal{F}^0_{\infty}$ by $ \mathcal{F}^{\mu}_{\infty}$. Let
\[
\mathcal{F}^{\mu}_t=\sigma(\mathcal{F}^0_t,\mathcal{N}_{\mu}),
\]
where $\mathcal{N}_{\mu}$ is the family of all $\mathbb{P}_{\mu}$-null sets in $\mathcal{F}^{\mu}_{\infty}$. Denote the set of probability measures on $E_{\Delta}$ by $\mathcal{P}(E_{\Delta})$ and let
\[
\mathcal{F}^Z_t=\bigcap_{\mu \in \mathcal{P}(E_{\Delta})} \mathcal{F}^{\mu}_t,\ t\in [0,\infty].
\]
The filtration $\{ \mathcal{F}^Z_t \}_{t\geq 0}$ is called the \textbf{minimum augmented admissible filtration} of $Z$. A subset $A\subset E_{\Delta}$ is called a \textbf{nearly Borel set} if for every $\mu \in \mathcal{P}(E_{\Delta})$, there exist Borel sets $A_0$, $A_1$ in $E_{\Delta}$ such that
\[
A_0\subset A\subset A_1,\ \mathbb{P}_{\mu}(Z_t\in A_1\backslash A_0,\ \text{for some }t\geq 0)=0.
\]
For a subset $A\subset E_{\Delta}$, define the stopping time $\sigma_A$ by
\begin{gather*}
\sigma_{A}(\omega):=\inf \{ t>0\mid Z_t(\omega)\in A\},\ \omega \in \Omega.
\end{gather*}
It is known that if $A$ is a nearly Borel set, then $\sigma_A$ is an $\{ \mathcal{F}^Z_t \}$-stopping time. A subset $A\subset E$ is said to be \textbf{$\bm{m}$-polar} if there exists a nearly Borel set $A_2$ such that $A\subset A_2$ and
\begin{gather*}
\mathbb{P}_m(\sigma _{A_2}<\infty)=\int_E\mathbb{P}_z(\sigma_{A_2}<\infty)\, m(dz)=0.
\end{gather*}
A property holds ``quasi-everywhere'', q.e. for short, if it holds except for an $m$-polar set. A subset $A\subset E$ is said to be \textbf{finely open} if for all $z\in A$, there exists a nearly Borel set $D_z\supset E\, \backslash \, A$ such that
\[
\mathbb{P}_z(\sigma_{D_z}>0)=1.
\]
We use the abbreviation AF (resp., CAF) for additive functional (resp., continuous additive functional). See \cite{ChenFuku} or \cite{FOT} for details about AF's. We denote the energy of an AF $A$ by
\[
\mathbf{e}(A)=\lim_{t\searrow 0}\frac{1}{2t}\mathbb{E}_m\left[ A_t^2 \right].
\]
We set
\begin{align*}
\mathcal{M}:=\{ M \mid M\ \text{is an AF},\ \mathbb{E}_z\left[ M_t^2 \right] <\infty,\ \mathbb{E}_z \left[ M_t \right] =0,\ \text{for q.e.}\ z\in E,\ t\geq 0\}.
\end{align*}
An additive functional in $\mathcal{M}$ is called a \textbf{martingale additive functional (MAF)}. For details about MAF's, see \cite{ChenFuku} of \cite{FOT}. MAF's often appear as follows. For $u\in \mathcal{F}_e$, there exist an MAF $M^{[u]}$ with finite energy and a CAF $N^{[u]}$ with zero energy such that
\begin{align}\label{FukuDec}
\tilde u(Z_t)-\tilde u(Z_0)=M^{[u]}_t+N^{[u]}_t,\ \mathbb{P}_z\text{-a.s. for q.e.}\ z\in E
\end{align}
and this expression is unique, where $\tilde u$ is a quasi-continuous modification of $u$. See \cite{Fuku} or Theorem 5.2.2 of \cite{FOT} for details. We define a \textbf{local MAF} as an AF which is a $\mathbb{P}_z$-local martingale for q.e. $z\in E$. For simplicity we suppose that a Dirichlet form $(\mathcal{E},\mathcal{F})$ is either recurrent or transient, that is the corresponding Hunt process $Z$ is either recurrent or transient, respectively. We will use the \textbf{reflected Dirichlet space} of $(\mathcal{E},\mathcal{F})$ denoted by $\mathcal{F}^r$. Historically, the reflected Dirichlet space was introduced in \cite{Silver}. Our main reference however is \cite{ChenFuku}. If $(\mathcal{E},\mathcal{F})$ is recurrent, it holds that $\mathcal{F}^r=\mathcal{F}_e$. On the other hand, if $(\mathcal{E},\mathcal{F})$ is transient, then the reflected Dirichlet space is defined as follows: A random variable $\phi$ on $(\Omega, \mathcal{F}_{\infty}^Z)$ is said to be \textbf{terminal} if for q.e. $z\in M$, $\phi$ is $\mathbb{P}_z$-integrable and it satisfies
\[
\left\{
\begin{array}{ll}
\{Z_{\zeta-}\in E,\ \zeta <\infty\} \cup \{\zeta=0\}\subset \{ \phi=0\},\\
\phi\circ \theta_t=\phi,\ t<\zeta
\end{array}
\right.
\]
$\mathbb{P}_z$-a.s. q.e. $z\in E$. For a terminal random variable $\phi$, define $h_{\phi}:M\to \mathbb{R}$ by $h_{\phi}(z)=\mathbb{E}_z\left[ \phi \right]$ and let
\begin{eqnarray}\label{Mphi}
M^{\phi}_t=
\begin{cases}
h_{\phi}(Z_t)-h_{\phi}(Z_0),\ &t<\zeta,\\
\phi-h_{\phi}(Z_0), \ &t\geq \zeta.
\end{cases}
\end{eqnarray}
Then $M^{\phi}$ is a $(\mathbb{P}_z,\{ \mathcal{F}_t^Z\}_{t\geq 0})$-uniformly integrable martingale, q.e. $z$. We let
\begin{align*}
\mathbf{N}=\{ \phi \mid &\ \text{a terminal random variable with}\ \mathbb{E}_z \left[ \phi^2 \right] <\infty,\ \text{q.e.}\ z\in M\ \text{and}\\
&M^{\phi}\ \text{is a martingale additive functional (MAF) with finite energy}\}, \\
\mathbf{HN}=\{h_{\phi}& \mid \phi \in \mathbf{N}\}.
\end{align*}
Then the reflected Dirichlet space is defined by
\[
\mathcal{F}^r=\mathcal{F}_e+\mathbf{HN}.
\]
The quadratic form $\mathcal{E}^r$ on the reflected Dirichlet space of a transient Dirichlet space is defined by
\[
\mathcal{E}^r(u,u)=\mathcal{E}(f,f)+\mathbf{e}(M^{\phi})+\frac{1}{2}\int_E |h_{\phi}(z)|^2\, k(dz), 
\]
where $u=f+h_{\phi}$, $f\in \mathcal{F}_e$, $\phi \in \mathbf{H}$, and $k$ is the killing measure on $E$, which is defined by the vague limit of the family $\{ \frac{1}{t_n}(1-p_{t_n}1)\cdot m \}_{n \in \mathbb{N}}$ of measures for a subsequence $\{t_n\}_{n\in \mathbb{N}}$ satisfying $t_n \downarrow 0$ as $n \to \infty$. First we consider the relation between the analytic characterization of harmonic maps and the probabilistic one in some situations. In \pref{harmonicmartingale}, we will show the relationship between a manifold-valued martingale and a harmonic map contained in the domain of the generator of a Markov process. To show such a result for general harmonic maps which belong to local Dirichlet forms, we need to consider the stochastic integral along additive functionals with zero energy, which was considered in \cite{Nakao}, \cite{CFKZ} and \cite{Kuwae} and extend the result of \cite{Pic4} to the case of non-local Dirichlet forms. 
However, in this article, we only show such a result in particular cases in order to make this article concise and focus on applications of the theory of martingales.
\begin{dfn}
The operator $\mathcal{L}:\mathcal{F}^r \to L^2(E;m)$ is defined by 
\begin{align*}
\mathcal{D}(\mathcal{L})&=\{ u\in \mathcal{D}^r\mid \text{there exists}\ w\in L^2(E;m)\ \text{such that}\ \\
&\hspace{2cm} \mathcal{E}^r(u,v)=-\langle w,v\rangle_{L^2},\ \text{for all}\ v\in \mathcal{F}\cap C_0(E) \},\\
\mathcal{L}u&=w,\ u\in \mathcal{D}(\mathcal{L}).&
\end{align*}
\end{dfn}
Let $M$ be a compact Riemannian submanifold of $\mathbb{R}^d$ with $\text{dim}\, M=n$ and $\iota:M\to \mathbb{R}^d$ an embedding. Denote the orthonormal projection from $\mathbb{R}^d$ to $T_yM$ by $\Pi_y$ for $y\in M$. Let $\eta$ be a connection rule on $M$ defined by
\[
\eta(x,y)=\Pi_x(y-x),\ x,y\in M.
\]
Let
\[
\mathcal{D}_M(\mathcal{L})=\{ u=(u^1,\dots,u^d):E\to \mathbb{R}^d\mid u^i\in \mathcal{D}(\mathcal{L})\ \text{for each}\ i\ \text{and}\ \tilde u(z)\in M,\ \text{q.e. }z\}.
\]
\begin{prop}\label{harmonicmartingale}
Suppose that ($\mathcal{E},\mathcal{F}$) is either recurrent or transient. Let $u$ be a Borel measurable map in $\mathcal{D}_M(\mathcal{L})$. Then for any relatively compact open subset $D$, $\{ \tilde u(Z_{t\land \tau_{D}})\}_{t\geq 0}$ is an $M$-valued $(\mathbb{P}_z, \{ \mathcal{F}^Z_t \})$-semimartingale with the end point $0$ for q.e. $z\in E$ and for all $f\in C^{\infty}(M)$,
\[
\int \langle D\bar{f}(\tilde u(Z_-)^{\tau_D}), d\tilde u(Z)^{\tau_D}_s\rangle-\int \langle D\bar{f}(\tilde u(Z_-)^{\tau_D}),\Pi_{\tilde u(Z_-)^{\tau_D}}\mathcal{L}u(Z_-)^{\tau_D}\rangle \, ds
\]
is a $\mathbb{P}_z$-square integrable martingale, where $\bar f$ a function on $\mathbb{R}^d$ satisfying \eqref{nabla}.
\end{prop}
\begin{proof}
Since $u^i\in \mathcal{F}^r$ for each $i=1,\dots,d$, there exist $v^i\in \mathcal{F}_e$ and $\phi^i\in \mathbf{H}$ such that $u^i=v^i+h_{\phi^i}$. Set
\[
M^i_t=\tilde u^i(Z_t)-\tilde u^i(Z_0)-\int_0^t\mathcal{L}u^i(Z_{s-})\, ds + \phi^i \mathbf{1}_{\{t\geq \zeta\}},\ i=1,\dots ,d.
\]
Then $M^i$ is an MAF because it can be written as the sum $M^i=M^{[v^i]}+M^{\phi^i}$, where $M^{[v^i]}$ and $M^{\phi^i}$ are MAF's defined in \eqref{FukuDec} and \eqref{Mphi}, respectively. Since $\phi$ is a terminal random variable and $D$ is relatively compact, it holds that
\[
\phi \mathbf{1}_{\{t\land \tau_D\geq \zeta\}}=0.
\]
Thus
\[
M^i_{t\land \tau_D}=\tilde u^i(Z_{t\land \tau_D})-\tilde u^i(Z_0)-\int_0^{t\land \tau_D}\mathcal{L}u^i(Z_{s-})\, ds
\]
is a $\mathbb{P}_z$-square integrable martingale for q.e. $z\in E$. In particular, $\tilde u(Z)^{\tau_D}$ is a $\mathbb{P}_z$-semimartingale for q.e. $z\in E$. Let $f\in C^{\infty}(M)$. Then we have
\begin{align*}
\int \langle D\bar f(\tilde u(Z_-)^{\tau_D}),d\tilde u(Z)^{\tau_D}_s\rangle&=\int \langle D \bar f(\tilde u(Z_-)^{\tau_D}),dM^{\tau_D}_s\rangle+ \int \langle  D\bar f(\tilde u(Z_-)^{\tau_D}),\mathcal{L}u(Z_-)^{\tau_D}\rangle\, ds\\
=\int \langle  D\bar f(\tilde u(Z_-)^{\tau_D})&,dM^{\tau_D}_s\rangle + \int \langle  D\bar f(\tilde u(Z_-)^{\tau_D}),\Pi_{\tilde u(Z_-)^{\tau_D}}\mathcal{L}u(Z_-)^{\tau_D}\rangle \, ds.
\end{align*}
Thus the required result follows.
\end{proof}
\begin{rem}
Suppose $D$ is a relatively compact open set and satisfies $\mathbb{P}_z(\tau_D<\infty)=1$ for q.e. $z\in D$. By \pref{harmonicmartingale}, if $u\in \mathcal{D}_M(\mathcal{L})$ satisfies
\[
\mathcal{L}u(z)\perp T_{u(z)}N,\ m\text{-a.e. }z\in D, 
\]
then $\tilde u(Z)^{\tau_D}$ is an $M$-valued $(\mathbb{P}_z, \{ \mathcal{F}^Z_t \})$-$\eta$-martingale with the end point $0$ for q.e. $z \in D$. Indeed, if we set
\[
q(z)=\Pi_{\tilde u(z)}\mathcal{L}u(z),\ q(z)=(q^1(z),\dots,q^d(z)),\ z\in E,
\]
it holds that
\begin{align*}
\mathbb{E}_z \left[ \int_0^{\tau_D}|q^i(Z_s)|\, ds \right] &=R^D|q^i|(z)=0,\ \text{q.e.}\ z\in D,\ i=1,\dots, d,
\end{align*}
where $R^D$ is the $0$-resolvent of the part process $(\Omega, \{Z^{D}_t\}_{t\geq 0}, \zeta^{D}, \{\mathbb{P}_z\}_{z\in D})$ of $Z$ on $D$, which is defined by
\begin{align*}
Z^{D}_t(\omega)&=\left\{
\begin{array}{ll}
Z_t(\omega),\ &0\leq t<\tau_{D},\\
\Delta,\ &t\geq \tau_{D},
\end{array}
\right.\\
\zeta^{D}(\omega)&=\tau_{D}(\omega)
\end{align*}
for $\omega \in \Omega$, since $q^i(z)=0$, a.e. $z\in D$ and $R^D|q^i|$ is quasi-continuous. Therefore there exists an $m$-polar set $A$ such that for all $z\in D\, \backslash \, A$, we have $q^i(Z_{s\land \tau_D-})=0$, a.e. $s\geq 0$, $\mathbb{P}_z$-a.s. Therefore for all $z\in D\, \backslash \, A$ and $f\in C^{\infty}(M)$,
\[
\langle D\bar{f}(\tilde u(Z_{s-})^{\tau_D}),q(Z_{s-})^{\tau_D}\rangle=0,\ \text{a.e.}\ s\geq 0,\ \mathbb{P}_z\text{-a.s.},
\]
and consequently
\[
\int_0^t\langle D\bar{f}(\tilde u(Z_s)^{\tau_D}),q(Z_s)^{\tau_D}\rangle \, ds=0,\ \mathbb{P}_z\text{-a.s.}
\]
Thus $\int \langle D\bar{f} (\tilde u(Z_-)^{\tau_D})\, d\tilde u(Z)^{\tau_D}\rangle$ is a $\mathbb{P}_z$-square integrable martingale for $z\in D\, \backslash \, A$ by \pref{harmonicmartingale}. In particular, there exist $m$-polar sets $N_i,\ i=1,\dots,d$ such that $\tilde u^i(Z_t)^{\tau_D}=\bar{\iota^i}(\tilde u(Z_t)^{\tau_D})$ is a $(\mathbb{P}_z,\{ \mathcal{F}^Z_t\})$-semimartingale with the end point $0$ and $\int \langle D\bar{\iota^i} (\tilde u(Z_-)^{\tau_D}), d\tilde u(Z)^{\tau_D}\rangle$ is a $(\mathbb{P}_z,\{ \mathcal{F}^Z_t\})$-local martingale for all $z\in D\, \backslash N_i$. Setting $N:=\bigcup_{i=1}^dN_i$, we can deduce that $\tilde u(Z)^{\tau_D}$ is an $M$-valued $(\mathbb{P}_z,\{ \mathcal{F}^Z_t\})$-$\eta$-martingale with the end point $0$ for all $z\in D\, \backslash N$ by \lref{martingalelem1}.
\end{rem}
Following \pref{harmonicmartingale} and \cite{Pic3, Pic4}, we define harmonic maps with respect to Markov processes.
\begin{dfn}
Let $u\colon E\to M$ be a Borel measurable map, $D\subset E$ an open set. Then $u$ is called a \textbf{quasi-harmonic map} on $D$ if for all relatively compact open set $D_1$ with $\overline{D_1}\subset D$, $\{u(Z_{t\land \tau_{D_1}}) \}_{t\geq 0}$ is a $\mathbb{P}_z$-$\eta$-martingale with the end point $0$ for q.e. $z\in E$.
\end{dfn}
\begin{ex}\label{fractional}
We consider the case that $E=\mathbb{R}^m$. Then $(\mathcal{E},\mathcal{F})$ is given by
\[
\begin{cases}
\mathcal{F} = H^{\frac{\alpha}{2}}(\mathbb{R}^m):=\{u\in L^2(\mathbb{R}^m)\mid \mathcal{E}(u,u)<\infty \},\\
\mathcal{E}(u,v)=\int_{\mathbb{R}^m\times \mathbb{R}^m} (u(z)-u(w))(v(z)-v(w))\, N(z,dw)dz,
\end{cases}
\]
where $\alpha \in (0,2)$ and $N(z,dw)$ is a kernel on $\mathbb{R}^m$ given by
\begin{align*}
&N(z,dw)=c_{\alpha, m}|z-w|^{-(m+\alpha)}dw,\\
&c_{m,\alpha}=\alpha 2^{\alpha-2}\pi^{-\frac{m+2}{2}}\sin \left( \frac{\alpha \pi}{2}\right) \Gamma \left( \frac{m+\alpha}{2}\right)  \Gamma \left( \frac{\alpha}{2}\right).
\end{align*}
The Hunt process $Z$ with respect to the above Dirichlet form is called a symmetric $\alpha$-stable process on $\mathbb{R}^m$. This process is a Lévy process with characteristics $(0,\nu,0)$, where $\nu$ is a $\sigma$-finite measure on $\mathbb{R}^m$ given by
\begin{gather*}
\nu(dz)=\frac{c_{m,\alpha}}{|z|^{m+\alpha}}dz.
\end{gather*}
Let $M$ be an $n$-dimensional compact submanifold of $\mathbb{R}^d$. In \cite{Lio, Lio2}, an $\alpha$-fractional harmonic map is defined by the variational problem. The \textbf{local Sobolev space} $H^{\frac{\alpha}{2}}_{loc}(\mathbb{R}^m)$ is the set of functions $u\colon \mathbb{R}^m\to \mathbb{R}$ such that for any relatively compact open subset $G \subset \mathbb{R}^m$, there exists $u_{G} \in H^{\frac{\alpha}{2}}(\mathbb{R}^m)$ such that $u=u_G$, $m$-a.e. on $G$. We set
\[
H_{loc}^{\frac{\alpha}{2}}(\mathbb{R}^m;M)=\{u=(u^1,\dots,u^d):\mathbb{R}^m \to \mathbb{R}^d \mid u^i \in H_{loc}^{\frac{\alpha}{2}}(\mathbb{R}^m)\, \text{for each }i\, \text{and }u(z) \in M,\ \text{a.e. }z \}.
\]
For $u\in H_{loc}^{\frac{\alpha}{2}}(\mathbb{R}^m;M)$, the energy of $u$ is defined by
\[
\mathcal{E}(u)=\int_{\mathbb{R}^m\times \mathbb{R}^m} |u(z)-u(w)|^2\, N(z,dw)dz=\int_{\mathbb{R}^m}|(-\Delta)^{\frac{\alpha}{4}} u(z)|^2dz.
\]
A map satisfying the corresponding Lagrange equation in a weak sense is called a \textbf{weakly $\dfrac{\bm{\alpha}}{\bm{2}}$-fractional harmonic map}. For a bounded domain $D$, the Lagrange equation on $D$ can be written as
\[
(-\Delta)^{\frac{\alpha}{2}}u \perp T_uM
\]
in the sense of distributions. By \pref{harmonicmartingale}, a fractional harmonic map defined by a variational problem is related to a martingale on the target compact submanifold of $\mathbb{R}^d$ at least in the case $u$ is in the domain of the fractional Laplacian.
\end{ex}
We apply \tref{conv2} to $S^1$-valued martingales. For $0<R<\frac{\pi}{2}$, we set
\[
\mathcal{B}_{R}=\{(\cos \theta,\sin \theta)\in S^1\mid |\theta|\leq R  \}.
\]
\begin{lem}\label{F>C}
We fix $R>0$ satisfying $R<\dfrac{\pi}{2}$ and define a function $\phi \colon [-R,R]\to \mathbb{R}$ with a parameter $q>0$ as
\begin{gather}
\phi(t):=1-\cos qt\label{fdef}
\end{gather}
and set
\begin{gather*}
F(s,t):=\frac{\phi (t)-\phi (s)-\phi ' (s)\sin (t-s)}{|t-s|^2}.
\end{gather*}
Then for sufficiently small $q>0$, there exists $C>0$ such that it holds that
\begin{gather}
F(s,t)>C\label{Fineq}
\end{gather}
for all $s,t\in [-R,R]$.
\end{lem}
\begin{proof}
First, for $s,t\in [-R,R]$, there exists $\theta_{st}$ between $s$ and $t$ such that
\[
\phi(t)-\phi(s)-\phi'(s)\sin (t-s) = \frac{q^2\cos q\theta_{st}}{2} \, (t-s)^2 + q\sin (qs) ((t-s)-\sin (t-s)).
\]
Thus it holds that
\begin{align*}
\lim_{|t-s|\to 0}F(s,t)&= \lim_{|t-s|\to 0}\left(\frac{q^2\cos (q\theta_{st})}{2} +q\sin (qs) \frac{(t-s)-\sin (t-s)}{|t-s|^2}  \right)\\
&\geq \frac{q^2\cos (qR)}{2}\\
&>0.
\end{align*}
Therefore there exists $\varepsilon>0$ such that for all $s,t\in [-R,R]$
\[
|t-s|<\varepsilon \Rightarrow F(s,t)>\frac{q^2 \cos (qR)}{4}.
\]
On the other hand, if $|t-s|\geq \varepsilon$, it holds that
\begin{align*}
\frac{F(s,t)}{q^2}&=\frac{1}{(t-s)^2}\left( \frac{\cos (q\theta_{st})(t-s)^2}{2}+\frac{\sin (qs)}{q}((t-s)-\sin (t-s))\right)\\
&\to \frac{t^2-s^2-2s\sin (t-s)}{2|t-s|^2}\ \text{(as}\ q\to 0\ \text{)}\\
&\geq \frac{t^2-s^2-2s\sin (t-s)}{2R^2}
\end{align*}
and the above convergence is uniform. Furthermore it holds that
\[
m:=\min \{ t^2-s^2-2s\sin (t-s)\mid s,t\in [-R,R],\ |t-s|\geq \varepsilon  \}>0.
\]
Thus by taking a sufficiently small $q>0$, $F(s,t)$ can be bounded from below by the constant $\min \{\frac{mq^2}{4R^2}, \frac{q^2 \cos (qR)}{4}\}$.
\end{proof}
\begin{prop}\label{conv3}
Any $\eta$-martingale with values in $\mathcal{B}_R$ converges almost surely in $\mathcal{B}_R$ as $t \to \infty$.
\end{prop}
\begin{proof}
Let $X$ be a $\mathcal{B}_R$-valued martingale. We take $q>0$ satisfying \eqref{Fineq} and define a function $f$ on $\mathcal{B}_R$ by
\begin{gather*}
f(\cos \theta,\sin \theta):=\phi (\theta),
\end{gather*}
where $\phi$ is a function defined in \lref{F>C}. By It\^o's formula, it holds that
\begin{align*}
f(X_t)-f(X_0)&=\int_0^tdf(X_{s-})\, \eta dX_s+\int_0^t\nabla df(X_s)\, d[X,X]^c_s\\
&\h +\sum_{0<s\leq t}\{f(X_s)-f(X_{s-})-\langle{df(X_{s-}),\eta(X_{s-})\rangle }.
\end{align*}
Here by our choice of $\eta$, we have
\begin{align*}
f(y)-f(x)-\langle df(x),\eta (x,y) \rangle&=\phi (\arg (y))-\phi (\arg (x))-\phi'(\arg (x))\sin (\arg(y)-\arg (x)),\\
|y-x|&\leq \frac{2}{\pi}|\arg (y) - \arg (x)|
\end{align*}
for $x,y \in \mathcal{B}_R$. Thus by \lref{F>C}, there exists $C>0$ such that
\begin{gather}
f(X_t)-f(X_0)\geq \int_0^tdf(X_{s-})\, \eta dX_s +C[\iota (X),\iota(X)]_t.\label{submartingale}
\end{gather}
In particular by \eqref{submartingale}, $\int_0^tdf(X_{s-})\, \eta dX_s$ is a submartingale bounded from above. Thus by taking expectations in \eqref{submartingale}, we obtain
\[
\mathbb{E}\left[ [\iota(X),\iota(X)]_{\infty} \right] \leq \frac{2}{C}.
\]
Thus by \tref{conv2}, $X_t$ converges as $t\to \infty$ almost surely.
\end{proof}
\begin{cor}
Suppose $Z$ is conservative. Let $u\colon E \to \mathcal{B}_R$ be a quasi-harmonic map on $E$ with respect to $Z$. We further suppose that any bounded harmonic function on $E$ with respect to $Z$ is constant. Then $u$ is constant quasi-everywhere on $E$.
\end{cor}
\begin{proof}
Let
\[
Y=\liminf _{t\to \infty} \iota (\tilde u(Z_t)).
\]
Then
\[
Y\circ \theta_s=Y,\ s\geq 0.
\]
On the other hand, there exists an $m$-polar set $N$ such that $\displaystyle \lim_{t\to \infty} \tilde u(Z_t)$ exists in $M$ and $\displaystyle \lim_{t\to \infty} \iota(\tilde u(Z_t))=Y$ $\mathbb{P}_z$-a.s. for all $z\in E\, \backslash \, N$ by \pref{conv3}. Since every bounded harmonic function is constant on $E$ by assumption, $\displaystyle \lim_{t\to \infty} \iota (\tilde u(Z_t))$ is a non-random point $y\in \iota (M)$ under $\mathbb{P}_z$ for $z\in E\, \backslash \, N$ and the limit does not depend on $z$. Let $x\in M$ be a point such that $\iota (x)=y$. Then
\[
\lim_{t\to \infty}\tilde u(Z_t)=x,\ \mathbb{P}_z\text{-a.s}.
\]
On the other hand, by \cite[Propositions 3.4 and 6.1]{Pic2}, there exists a distance function $\delta$ on $\mathcal{B}_{\frac{\varepsilon}{2}}$ such that $\delta$ is equivalent to the standard Riemannian distance on $\mathcal{B}_R$. Moreover, if we let
\[
\psi_x(y):=\delta (x,y)^p,\ y\in \mathcal{B}_R,
\] 
for fixed $p>\frac{1}{\sin (\pi -R)}$, then $\psi_x(\tilde u(Z))$ is a non-negative bounded submartingale. Hence $\psi_x(\tilde u(Z))$ converges $\mathbb{P}_z$-a.s. for q.e. $z$ and
\[
\lim_{t\to \infty}\psi_x(\tilde u(Z_t))=\psi_x(x)=0.
\]
Since $\psi_x$ is non-negative, for all $s\leq t$ and $\Lambda \in \mathcal{F}_s$, it holds that
\[
0\leq \mathbb{E}\left[ \psi_x(\tilde u(Z_s))\mathbf{1}_{\Lambda} \right] \leq \mathbb{E}\left[ \psi_x(\tilde u(Z_t))\mathbf{1}_{\Lambda} \right].
\]
Applying the bounded convergence theorem, we obtain $\mathbb{E}\left[ \psi_x(\tilde u(Z_s))\mathbf{1}_{\Lambda} \right] =0$. Since this holds for any $\Lambda \in \mathcal{F}_s$ and $s\geq 0$, we have $\psi_x(\tilde u(Z_s))=0$, $\mathbb{P}_z$-a.s. for q.e. $z$. Therefore $\tilde u(Z_t)=x$, $t \geq 0$. In particular  $\tilde u(z)=x$ for q.e. $z$.
\end{proof}
Next we consider singularities of harmonic maps. We suppose the transition function of a Markov process $Z$ satisfies the \textbf{condition of absolute continuity}, that is, there exists a Borel measurable function $p\colon [0,\infty)\times E\times E\to [0,\infty]$ such that
\[
p_tf(z)=\int_E f(w)p(t,z,w)\, m(dw),\ \text{for all}\, f\in \mathcal{B}_+(E).
\]
For a quasi-harmonic map $u$ with respect to $Z$, whether $\displaystyle \lim_{t\to 0}u(Z_t)$ exists or not under $\mathbb{P}_z$ is related to singularities of $u$. Then $u$ is said to be finely continuous at $z\in E$ if it holds that
\[
\mathbb{P}_z \left( \lim_{t\to 0} u(Z_t)=u(Z_0) \right) =1.
\]
We show some lemmas before the application of \tref{convtto0}. We remark that a semimartingale $X$ is said to be \textbf{special} if it can be decomposed as
\begin{gather}\label{canonical}
X=X_0+L+A,\ L_0=A_0=0,
\end{gather}
where $L$ is a local martingale and $A$ is a predictable process of locally integrable variation. We call the decomposition \eqref{canonical} the \textbf{canonical decomposition}.
\begin{lem}\label{martingaledecomposition}
Let $M$ be a compact Riemannian submanifold in $\mathbb{R}^d$ and $(\{X_t\}_{t\geq 0},\zeta, p)$ an $\eta$-martingale with an end point $p$ on a probability space $(\Omega,\mathcal{F},\{\mathcal{F}_t\}_{t\geq 0},\mathbb{P})$. Denote the canonical decomposition of $X$ by
\[
X=X_0+L+A.
\]
Let $\iota \colon M\to \mathbb{R}^d$ be an inclusion map and $\bar{\iota}$ an extension of $\iota$ satisfying \eqref{nabla}. Then
\[
\int_0^t \langle D \bar{\iota^i}(X_{s-})\, ,dA_s\rangle=0
\]
almost surely for all $t\geq 0$ and $i=1,\dots,d$.
\end{lem}
\begin{proof}
Since $X$ is an $\eta$-martingale with an end point, $\int_0^t \langle D \bar{\iota^i}(X_{s-})\, ,d\bar{\iota} (X_s)\rangle$ is a local  martingale. Now we have
\begin{align}\label{intA}
\int_0^t \langle D \bar{\iota^i}(X_{s-})\, ,dA_s\rangle&= \int_0^t \langle D \bar{\iota^i}(X_{s-})\, ,d\bar{\iota} (X_s)\rangle - \int_0^t \langle D \bar{\iota^i}(X_{s-})\, ,dL_s\rangle.
\end{align}
Since $D \bar{\iota}$ is bounded, \eqref{intA} implies that $\int_0^t \langle \nabla \bar{\iota^i}(X_{s-})\, ,dA_s\rangle$ is a predictable local martingale of locally bounded variation. Thus it equals zero.
\end{proof}
Next we will show that if $u$ is a quasi-harmonic map on an open set $D\subset E$, $\{u(Z_{t\land \tau_{D_1}}) \}_{t>0}$ is a $\mathbb{P}_z$-$\eta$-martingale with the end point $0$ for any $z\in D$ and relatively compact open subset $D_1$ satisfying
\begin{gather}
z\in D_1,\ \overline{D_1}\subset D \label{D1}
\end{gather}
under the absolute continuity of $\{p_t \}_{t\geq 0}$. Such a fact has already been mentioned in \cite{Pic4} in continuous cases, but we will give a proof in our situation including discontinuous cases.
\begin{lem}\label{absoluteconti}
Suppose the transition function of $Z$ satisfies the condition of absolute continuity. Let $M$ be a compact Riemannian submanifold in $\mathbb{R}^d$, $D\subset E$ an open set and $u\colon E\to M$ a quasi-harmonic map on $D$. Then $\{u(Z_{t\land \tau_{D_1}}) \}_{t>0}$ is a $(\mathbb{P}_z,\{\mathcal{F}^Z_t \}_{t>0})$-$\eta$-martingale with the end point $0$ for any $z\in D$ and relatively compact open subset $D_1$ satisfying \eqref{D1}.
\end{lem}
\begin{proof}
Let $(\Omega, \{Z^{D_1}_t\}_{t\geq 0}, \zeta^{D_1}, \{\mathbb{P}_z\}_{z\in D_1})$ be a part process of $Z$ on $D_1$. Then it holds that
\[
\tilde u(Z^{D_1}_t)-\tilde u(Z^{D_1}_0)=\tilde u(Z_{t\land \tau_{D_1}})-\tilde u(Z_0)-\tilde u(Z_{\tau_{D_1}})\mathbf{1}_{\{t\geq \tau_{D_1}\}}.
\]
Since $u$ is a quasi-harmonic on $D$, there exists a properly exceptional set $N$ such that $\{\tilde u(Z^{D_1}_t)\}_{t\geq 0}$ is an $\mathbb{R}^d$-valued $(\mathbb{P}_z,\{\mathcal{F}^Z_t\}_{t\geq 0})$-semimartingale for all $z\in D_1\, \backslash \, N$. Thus for each $i=1,\dots,d$, there exists a local MAF $L^{D_1,i}$ and an AF $A^{D_1,i}$ of locally integrable variation such that $L^{D_1}+A^{D_1}=(L^{D_1,1}+A^{D_1,1},\dots,L^{D_1,d}+A^{D_1,d})$ is a canonical decomposition of the special semimartingale $\tilde u(Z^{D_1})-\tilde u(Z_0)$ under $\mathbb{P}_z$ for all $z\in D_1\, \backslash \, N$ by Theorem (3.18) in \cite{CJPS}. We set
\begin{align*}
B_t&:=\tilde u(Z_{\tau_{D_1}})\mathbf{1}_{\{t\geq \tau_{D_1}\}},\\
\tilde B_t&:=\int_0^{t\land \tau_{D_1}}\int_E\left((\mathbf{1}_{E_{\Delta}\, \backslash \, D_1}\tilde u)(Z_s)-(\mathbf{1}_{E_{\Delta}\, \backslash \, D_1}\tilde u)(w)\right) \, N(Z_s,dw)dH_s,
\end{align*}
where $(N,H)$ is a L\'evy system of $Z$, which is a pair of a kernel $N(z, dw)$ and a PCAF $H$ of $Z$ satisfying
\[
\mathbb{E}_z \left[ \sum_{0<s\leq t}Y_sf(Z_{s-},Z_s) \right] =\mathbb{E}_z\left[ \int_0^{\infty}Y_s\left( \int_{E_{\Delta}}f(Z_s,w)\, N(Z_s,dw) \right) \, dH_s \right]
\] 
for any non-negative predictable process $\{Y_s\}$ and any $f\in \mathcal{B}_{+}(E_{\Delta} \times E_{\Delta})$ with $f(w,w)=0$, $w\in E$. Then $\tilde B$ is a CAF of $Z^{D_1}$ and it is the $\mathbb{P}_z$-dual predictable prediction of $B$. We further set
\begin{align*}
L_t&:=L^{D_1}_t+B_t-\tilde B_t,\\
A_t&:=\tilde B_t.
\end{align*}
Then $\tilde u(Z_{t\land \tau_{D_1}})-\tilde u(Z_0)=L_t+A_t$ is the canonical decomposition. Thus by \lref{martingaledecomposition}, it holds that
\begin{gather}\label{intA2}
\int \langle \nabla \bar{\iota^i}(u(Z_{-})^{\tau_{D_1}})\, ,dA\rangle=0.
\end{gather}
Since $B$ is bounded, $B-\tilde B$ is a $\mathbb{P}_z$-square integrable martingale for q.e. $z\in D_1\, \backslash \, N$ by the lemma on p. 160 in \cite{Pro}. Moreover, there exists an increasing sequence of nearly Borel finely open subsets $\{ G_n \}_{n=1}^{\infty}$ such that $\bigcup_{n=1}^{\infty}G_n=D_1\, \backslash \, N$, $\displaystyle \mathbb{P}_z(\lim_{n\to \infty}\sigma_{D_1\, \backslash \, G_n}=\infty)=1$ for all $z\in D_1\, \backslash \, N$, and
\begin{gather}\label{Lemma4.7}
\sup_{z\in D_1\, \backslash \,N}\mathbb{E}_z\left[\int_{(0,\sigma_{D_1\, \backslash \, G_n}]}e^{-s}\, d[L^{D_1},L^{D_1}]_s\right]<\infty
\end{gather}
by Lemma (4.7) in \cite{CJPS}. In particular, $L^{D_1, \sigma_{D_1\, \backslash \, G_n}}$ is a $\mathbb{P}_z$-square integrable for all $n\in \mathbb{N}$ and $z\in D_1\, \backslash \, N$ since for fixed $t\geq 0$, it holds that
\begin{align*}
\sup_{z\in D_1\, \backslash \,N}\mathbb{E}_z\left[ [L^{D_1, \sigma_{D_1\, \backslash \, G_n}},\right.&\left.  L^{D_1, \sigma_{D_1\, \backslash \, G_n}}]_t \right] \\
&\leq e^t e^{-t}\sup_{z\in D_1\, \backslash \,N}\mathbb{E}_z\left[ [L^{D_1, \sigma_{D_1\, \backslash \, G_n}},L^{D_1, \sigma_{D_1\, \backslash \, G_n}}]_t\right] \\
&\leq  e^t \sup_{z\in D_1\, \backslash \,N}\mathbb{E}_z\left[ \int_{(0,\sigma_{D_1\, \backslash \, G_n}\land t]}e^{-s}\, d[L^{D_1},L^{D_1}]_s\right] <\infty
\end{align*}
by \eqref{Lemma4.7}. Thus $L^{\sigma_{D_1\, \backslash \, G_n}}$ is $\mathbb{P}_z$-square integrable martingale for each $n\in \mathbb{N}$ and $z\in D_1\backslash \, N$. To simplify the notation, we set $\tau=\tau_{D_1}$, $\sigma=\sigma_{D_1\, \backslash \, G_n}$ for fixed $n\in \mathbb{N}$. Fix $\varepsilon>0$. Let $\Lambda$ be an intersection of defining sets of $L$ and $A$.  Since $N$ is a polar set under the condition of absolute continuity, we have
\[
\mathbb{P}_z(\theta_{\varepsilon}^{-1}\Lambda)=\mathbb{E}_z\left[\mathbb{E}_{Z_{\varepsilon}} \left[ \mathbf{1}_{\Lambda} \right] \right]=1
\]
for all $z\in E$. On $\theta_{\varepsilon}^{-1}\Lambda$, it holds that
\begin{align*}
\tilde u(Z_{(t+\varepsilon)\land \tau \land \sigma})-&\tilde u(Z_{\varepsilon \land \tau \land \sigma})=\left( L_{t\land \sigma}\circ \theta_{\varepsilon}+A_{t\land \sigma}\circ \theta_{\varepsilon}\right)\mathbf{1}_{\{\varepsilon < \tau \land \sigma\}}
\end{align*}
for each $n$. We will show that $t\mapsto L_{t\land \sigma}\circ \theta_{\varepsilon}\mathbf{1}_{\{\varepsilon < \tau \land \sigma \}}$ is a martingale. We take $s, t \geq 0$ and set
\begin{align*}
\Omega_1&:=\{s<(\tau \land \sigma) \circ \theta_{\varepsilon},\ \varepsilon < \tau \land \sigma \}\cup \{\sigma \circ \theta_{\ep} = \infty,\ \varepsilon < \tau \land \sigma\},\\
\Omega_2&:=\{s\geq (\tau \land \sigma) \circ \theta_{\varepsilon},\ \sigma \circ \theta_{\ep}<\infty,\ \varepsilon < \tau \land \sigma \}.
\end{align*}
Then
\[
\Omega_1 \cup \Omega_2=\{ \ep <\tau \land \sigma \}.
\]
For $\omega \in \theta_{\varepsilon}^{-1}\Lambda$, it holds that
\begin{align*}
L_{(t+s)\land \sigma} \circ \theta_{\varepsilon}(\omega)\mathbf{1}_{\{\varepsilon < \tau \land \sigma \}} (\omega)
&= L_{(t+s)\land \sigma (\theta_{\varepsilon}\omega)}(\theta_{\varepsilon}\omega)\mathbf{1}_{\{\varepsilon < \tau \land \sigma \}}(\omega)\\
&= L_{(t+s)\land \sigma (\theta_{\varepsilon}\omega)}(\theta_{\varepsilon}\omega)\left(\mathbf{1}_{\Omega_1}+\mathbf{1}_{\Omega_2} \right)(\omega).
\end{align*}
By a simple calculation, we have
\begin{align*}
L^{D_1}_{(s+t)\land \sigma}(\omega)&=L^{D_1}_{s\land \sigma}(\omega)+L^{D_1}_{t\land \sigma}\circ \theta_s (\omega),\\
B_{(s+t)\land \sigma}&=B_{s\land \sigma}(\omega)+B_{t\land \sigma}\circ \theta_s (\omega),\\
\tilde B_{(s+t)\land \sigma}&=\tilde B_{s\land \sigma}(\omega)+\tilde B_{t\land \sigma}\circ \theta_s (\omega)
\end{align*}
for $\omega \in \Lambda$ with $s<(\tau \land \sigma) (\omega)$ or $\sigma (\omega)=\infty$. Therefore it holds that
\begin{align*}
L_{t\land \sigma (\theta_{\varepsilon} \omega)}(\theta_{\varepsilon}\omega)=L_{s\land \sigma (\theta_{\ep}(\omega))}(\theta_{\varepsilon} \omega) + L_{t\land \sigma (\theta_{s+\varepsilon} \omega)}(\theta_{s+\varepsilon}\omega)
\end{align*}
for $\omega \in \Omega_1 \cap \theta_{\varepsilon}^{-1}(\Lambda)$. Thus it holds that
\begin{align*}
&L_{(t+s)\land \sigma} \circ \theta_{\varepsilon}(\omega)\mathbf{1}_{\{\varepsilon < \sigma\}} (\omega)\\
&= \left( L_{s\land \sigma }\circ \theta_{\varepsilon}(\omega) + L_{t\land \sigma}\circ \theta_{s+\varepsilon}(\omega) \right)\mathbf{1}_{\Omega_1}(\omega) +L_{\sigma }\circ \theta_{\varepsilon} (\omega) \mathbf{1}_{\Omega_2}(\omega).
\end{align*}
By the last equality, we obtain
\begin{align*}
&\mathbb{E}_z \left[ L_{(t+s)\land \sigma} \circ \theta_{\varepsilon}\mathbf{1}_{\{\varepsilon < \tau \land \sigma \}} \mid \mathcal{F}_{s+\varepsilon} \right]\\
&= \mathbb{E}_z \left[ \left( L_{s\land \sigma} \circ \theta_{\varepsilon}+L_{t\land \sigma} \circ \theta_{s+\varepsilon}\right)\mathbf{1}_{\Omega_1}+L_{\sigma}\circ \theta_{\varepsilon}\mathbf{1}_{\Omega_2}\mid \mathcal{F}_{s+\varepsilon} \right]\\
&= L_{s\land \sigma}\circ \theta_{\ep} \mathbf{1}_{\Omega_1}+\mathbb{E}_{Z_{s+\ep}} \left[ L_{t\land \sigma} \right] \mathbf{1}_{\{\ep < \tau \land \sigma\}}+L_{\sigma}\circ \theta_{\varepsilon}\mathbf{1}_{\Omega_2}\\
&= L_{s\land \sigma}\circ \theta_{\ep}\mathbf{1}_{\{\ep < \tau \land \sigma\}}.
\end{align*}
Therefore $\{L_{t\land \sigma}\circ \theta_{\ep}\mathbf{1}_{\{\varepsilon < \tau \land \sigma\}} \}_{t\geq 0}$ is a $(\mathbb{P}_z,\{ \mathcal{F}_{t+\varepsilon} \}_{t\geq 0})$-martingale for all $z\in D_1$ and $\ep >0$. On the other hand, for $\omega \in \theta^{-1}_{\ep}\Lambda$, $t\mapsto A_{t\land \sigma (\theta_{\ep}\omega)}(\theta_{\ep}\omega)$ is a continuous function of locally bounded variation and it holds that
\begin{align*}
\int \langle D \bar{\iota^i}(&\tilde u(Z_{(\varepsilon + s)\land \tau (\omega)\land \sigma (\omega)-}(\omega)))\, ,d\left(A_{s\land \sigma} \circ \theta_{\varepsilon}(\omega) \mathbf{1}_{\{\ep < \tau \land \sigma \}} (\omega)\right)\rangle \\
&= \int \langle D \bar{\iota^i}(\tilde u(Z_{s\land \tau \land \sigma -})\circ \theta_{\varepsilon}(\omega)\, ,d\left(A_{s\land \sigma} \circ \theta_{\varepsilon}(\omega) \right)\rangle \mathbf{1}_{\{\ep < \sigma \}} (\omega)\\
&=0
\end{align*}
by \eqref{intA2}. Thus $\{ \tilde u(Z_{t+\ep})^{\tau_{D_1}\land \sigma_{D_1\, \backslash \, G_n}}\}_{t\geq 0}$ is a $\mathbb{P}_z$-$\eta$-martingale with the end point $0$ for all $\ep>0$, $z\in D_1$ and $n\in \mathbb{N}$. In particular, $\{\tilde u(Z_{t\land \tau_{D_1}}) \}_{t>0}$ is a $(\mathbb{P}_z,\{\mathcal{F}_t\}_{t>0})$-$\eta$-martingale with the end point $0$ for all $z\in D$.
\end{proof}
\begin{prop}
Suppose the transition function of $Z$ satisfies the condition of absolute continuity. Let $M$ be a compact Riemannian submanifold in $\mathbb{R}^d$ and $u\colon E\to M$ a quasi-harmonic map on an open set $D\subset E$. Then $u$ can be modified to be finely continuous at a point $z\in D$ if and only if there exists a relatively compact open subset $D_1$ satisfying \eqref{D1} such that it holds that
\begin{gather}
\lim_{\varepsilon \to 0}[\tilde u(Z)^{\tau_{D_1}},\tilde u(Z)^{\tau_{D_1}}]^{\varepsilon}_t<\infty \ \mathbb{P}_z\text{-a.s.}\label{finelyconti}
\end{gather}
\end{prop}
\begin{proof}
Under the assumption, $\{\tilde u(Z_{t\land \tau_{D_1}})\}_{t>0}$ is a $\mathbb{P}_w$-$\eta$-martingale on $(0,\infty)$ for all $w\in D$ and $D_1$ by \lref{absoluteconti}. Suppose $\tilde u$ is finely continuous at a point $z\in D$. Then $\displaystyle \lim_{t\to 0}\tilde u(Z_{t\land \tau_{D_1}})$ exists $\mathbb{P}_z$-almost surely and equals $\tilde u(z)$. Thus by \tref{convtto0}, $\tilde u(Z)^{\tau_{D_1}}$ can be extended to a $\mathbb{P}_z$-$\eta$-martingale on $[0,\infty)$. In particular \eqref{finelyconti} holds. Next we suppose $\{\tilde u(Z_{t\land \tau_{D_1}})\}_{t>0}$ satisfies \eqref{finelyconti}. Then by \tref{convtto0} again, $\displaystyle \lim_{t\to 0}\tilde u(Z_{t\land \tau_{D_1}})$ exists $\mathbb{P}_z$-almost surely. Furthermore the limit is constant under $\mathbb{P}_z$ by Blumenthal's 0-1 law. Thus $u$ has a modification which is finely continuous at $z$.
\end{proof}
\section*{Acknowledgements}
The author is grateful to Professor Hariya, his supervisor, for his careful reading of the previous version of the manuscript and for helpful comments. The author also thanks a referee for his/her constructive comments and corrections which have led to significant improvement of this paper. 
\begin{bibdiv}
\begin{biblist}


\bibliography{b}
\bib{Atsu}{article}{
   author={Atsuji, Astusi},
   title={Parabolicity, the divergence theorem for $\delta$-subharmonic functions and applications to the Liouville theorems for harmonic maps},
   journal={Tohoku Mathematical Journal},
   volume={57},
   date={2005},
   pages={353--373},
   issn={},
}

\bib{CG}{article}{
  title={Sur les martingales locales continues index\'ees par $]0,\infty[$ },
  author={Calais, J.-Y.},
  author={G\'enin, M.},
  journal={S\'eminaire de probabilit\'es},
  volume={17},
  number={},
  year={1983},
  pages={162--178},
}


\bib{CFKZ}{article}{
  title={Stochastic calculus for symmetric Markov processes},
  author={Chen, Zhen-Qing},
  author={Fitzsimmons, Patrick J.},
  author={Kuwae, Kazuhiro},
  author={Zhang, T.-S.},
  journal={Ann. Probab.},
  volume={36},
  year={2008},
  pages={931--970},
}

\bib{ChenFuku}{book}{
   author={Chen, Zhen-Qing},
   author={Fukushima, Masatoshi},
   title={Symmetric Markov Processes, Time Change, and Boundary Theory},
   publisher={Princeton University Press, Princeton},
   volume={},
   date={2011},
   pages={},
}
\bib{CJPS}{article}{
	title={Semimartingales and Markov processes},
	author={\c{C}inlar, E.},
	author={Jacod, J.},
	author={Protter, P.},
	author={Sharpe, M. J.},
	journal={Z. Wahrscheinlichkeitstheorie verw. Gebiete},
	volume={54},
	year={1980},
	pages={161--219},
}
\bib{Co1}{article}{
   author={Cohen, Serge},
   title={Géométrie différentielle stochastique avec sauts 1},
   journal={Stochastics: An International Journal of Probability and Stochastic Process},
   volume={56},
   date={1996},
   pages={179--203},
}
\bib{Co2}{article}{
   author={Cohen, Serge},
   title={Géométrie différentielle stochastique avec sauts 2: discrétisation et applications des eds avec sacutes},
   journal={Stochastics: An International Journal of Probability and Stochastic Process},
   volume={56},
   date={1996},
   pages={205--225},
}
\bib{Lio}{article}{
	author = {Da\ Lio, F.},
	author = {Rivi\`ere, Tristan},
	title = {Three-term commutator estimates and the regularity of $\frac{1}{2}$-harmonic maps into spheres},
	journal = {Analysis \& PDE},
	volume = {4},
	number = {1},
	year = {2011},
	pages = {149--190}
}
\bib{Lio2}{article}{
author = {Da\ Lio, F.},
	author = {Rivi\`ere, Tristan},
	title = {Sub-criticality of non-local Schr\"{o}dinger systems with antisymmetric potentials and applications to half-harmonic maps },
	journal = {Adv. Math.},
	publisher = {},
	volume = {227},
	number = {},
	year = {2011},
	pages = {1300--1348}
}
\bib{Dar1}{article}{
     author = {Darling, R.W.R.},
     title = {Martingales on manifolds and geometric It\^o calculus},
     journal = {Ph.D. Thesis, University of Warwick},
     publisher = {},
     year = {1982},
}
\bib{Dar2}{article}{
     author = {Darling, R.W.R.},
     title = {Convergence of martingales on a Riemannian Manifold},
     journal = {Publications of the Research Institute for Mathematical Sciences},
     publisher = {RIMS, Kyoto Univ.},
     volume = {19},
     number = {2},
     year = {1983},
     pages = {753--763},
}
\bib{Eme}{book}{
	author = {\'Emery, Michel},
	title = {Stochastic Caluculus in Manifolds},
	series={Universitext},
	publisher = {Springer-Verlag, Berlin},
	volume = {},
	year = {1989},
	pages = {}
}

\bib{Eme2}{article}{
	author = {\'Emery, Michel},
	title = {Convergence des martingales dans les vari\'et\'s},
	journal = {Colloque en l'honneur de Laurent Schwartz},
	publisher = {Ast\'erisque},
	volume = {2},
	number = {132},
	year = {1985},
	pages = {47--63}
}

\bib{FOT}{book}{
	author = {Fukushima, Masatoshi},
	author = {Oshima, Yoichi},
	author = {Takeda, Masayoshi},
	title = {Dirichlet Forms and Symmetric Markov Processes, 2nd ed.},
	series={de Gruyter Stud. Math.},
	publisher = {Walter de Gruyter \& Co., Berlin},
	volume = {19},
	year = {2010},
	pages = {}
}

\bib{Fuku}{article}{
	author = {Fukushima, Masatoshi},
	title = {A decomposition of additive functionals of finite energy},
	journal = {Nagoya Math. J.},
	publisher = {},
	volume = {74},
	year = {1979},
	pages = {137--168}
}
\bib{HeYanZheng}{article}{
	author = {He, Sheng-Wu},
	author = {Yan, Jia-An},
	author = {Zheng, Wei-An},
	title = {Sur la convergence des semimartingales continues dans $\mathbb{R}^n$ et des martingales dans une vari\'et\'es},
	journal = {S\'eminaire de probabilit\'es},
	publisher = {Strasbourg},
	volume = {17},
	year = {1983},
	pages = {179--184}
}

\bib{HeZheng}{article}{
	author = {He, Sheng-Wu},
	author = {Zheng, Wei-An},
	title = {Remarques sur la convergence des martingales dans les vari\'et\'es},
	journal = {S\'eminaire de probabilit\'es},
	publisher = {Strasbourg},
	volume = {18},
	year = {1984},
	pages = {174--178}
}
\bib{Hsu}{book}{
	author = {Hsu, Elton P.},
	title = {Stochastic Analysis on Manifolds},
	series={Graduate Studies in Mathematics},
	publisher = {American Mathematical Society},
	volume = {38},
	year = {2002},
	pages = {}
}
\bib{Ken}{article}{
	author = {Kendall, W.S.},
	title = {Martingales on manifolds and harmonic maps},
	journal = {Contemporary Mathematics},
	publisher = {American Mathematical Society, Providence, RI},
	volume = {73},
	year = {1988},
	pages = {121--157}
}

\bib{Kuwae}{article}{
author = {Kuwae, Kazuhiro},
	title = {Stochastic calculus over symmetric Markov processes without time reversal},
	journal = {Ann. Probab.},
	publisher = {},
	volume = {38},
	year = {2010},
	pages = {1532--1569}
}
\bib{Mai}{article}{
  title={Stochastic Covariant Calculus with Jumps and Stochastic Calculus with Covariant Jumps},
  author={Maillard-Teyssier, Laurence},
  journal = {in: S\'eminaire de Probabilit\'es XXXIX, In Memoriam Paul-Andr\'e Meyer, in: Lecture Notes in Math., vol, 1874},
  pages = {381--417},
  volume = {},
  piblisher = {Springer, Berlin, New York},
  year={2006}
}
\bib{Mey}{article}{
     author = {Meyer, Paul-Andr\'e},
     title = {Le th\'eor\`eme de convergence des martingales dans les vari\'et\'es riemanniennes d'apr\`es R.W. Darling et W.A. Zheng},
     journal = {S\'eminaire de probabilit\'es de Strasbourg},
     pages = {187--193},
     publisher = {Springer - Lecture Notes in Mathematics},
     volume = {17},
     year = {1983}
}
\bib{Nakao}{article}{
   author={Nakao, Shintaro},
   title={Stochastic Calculus for Continuous Additive Functionals of Zero Energy},
   journal={Z. Wahrsch. Verw. Gebiete},
   volume={68},
   date={1985},
   pages={557--578},
}

\bib{Pic1}{article}{
     author = {Picard, Jean},
     title = {Calcul stochastique avec sauts sur une vari\'et\'e},
     journal = {S\'eminaire de Probabilit\'es de Strasbourg},
     publisher = {Springer - Lecture Notes in Mathematics},
     volume = {25},
     year = {1991},
     pages = {196--219}
}
\bib{Pic2}{article}{
   author={Picard, Jean},
   title={Barycentres et martingales sur une variété},
   journal={Annales de l'Institut Henri Poincaré Probabilités et Statistiques},
   volume={30},
   number={4},
   date={1994},
   pages={647--702},
}
\bib{Pic3}{article}{
   author={Picard, Jean},
   title={Smoothness of harmonic maps for hypoelliptic diffusions},
   journal={Ann. Probab.},
   volume={28 (2)},
   number={},
   date={2000},
   pages={643--666},
}

\bib{Pic4}{article}{
   author={Picard, Jean},
   title={The Manifold-Valued Dirichlet Problem for Symmetric Diffusions},
   journal={Potential Analysis},
   volume={14},
   number={},
   date={2001},
   pages={53--72},
}
\bib{Pro}{book}{
   author={Protter, Philip},
   title={Stochastic Integration and Differential Equations},
   series={Stochastic Modelling and Applied Probability},
   publisher={Springer Berlin, Heidelberg},
   volume={},
   number={2},
   date={2005}
}
\bib{Sharpe}{article}{
  title={Local times and singularities of continuous local martingales},
  author={Sharpe, Michael J.},
  journal={S\'eminaire de probabilit\'es},
  volume={14},
  number={},
  year={1980},
  pages={76--101},
}
\bib{Silver}{article}{
   author={Silverstein, M.L.},
   title={The reflected Dirichlet space},
   journal={Illinois Journal of Mathematics},
   volume={18},
   number={},
   date={1974},
   pages={310--355},
}
\bib{JBWalsh}{article}{
  title={A property of conformal martingales},
  author={Walsh, J.B.},
  journal={S\'eminaire de probabilit\'es},
  volume={581},
  publisher={Springer lecture notes},
  number={},
  year={1977},
  pages={490--492},
}


\end{biblist}
\end{bibdiv}

\end{document}